\providecommand{\U}[1]{\protect\rule{.1in}{.1in}}
\newtheorem{theorem}{Theorem}[section]
\newtheorem{corollary}[theorem]{Corollary}
\newtheorem{definition}[theorem]{Definition}
\newtheorem{example}[theorem]{Example}
\newtheorem{lemma}[theorem]{Lemma}
\newtheorem{problem}[theorem]{Problem}
\newtheorem{proposition}[theorem]{Proposition}
\newtheorem{remark}[theorem]{Remark}
\newenvironment{proof}[1][Proof]{\noindent\textbf{#1.} }{\ \rule{0.5em}{0.5em}}
\begin{document}

\author{Vadim E. Levit\\Department of Mathematics\\Ariel University, Israel\\levitv@ariel.ac.il
\and Eugen Mandrescu\\Department of Computer Science\\Holon Institute of Technology, Israel\\eugen\_m@hit.ac.il}
\title{On $1$-K\"{o}nig-Egerv\'{a}ry Graphs}
\date{}
\maketitle

\begin{abstract}
Let $\alpha(G)$ denote the cardinality of a maximum independent set, while
$\mu(G)$ be the size of a maximum matching in $G=\left(  V,E\right)  $. Let
$\xi(G)$ denote the size of the intersection of all maximum independent sets
\cite{LevMan2002a}. It is known that if $\alpha(G)+\mu(G)=n(G)=\left\vert
V\right\vert $, then $G$ is a \textit{K\"{o}nig-Egerv\'{a}ry graph
}\cite{dem,ster}. If $\alpha(G)+\mu(G)=n(G)-1$, then $G$ is a $1$%
\textit{-K\"{o}nig-Egerv\'{a}ry graph}. If $G$ is not a K\"{o}nig-Egerv\'{a}ry
graph, and there exists a vertex $v\in V$ (an edge $e\in E$) such that $G-v$
($G-e$) is K\"{o}nig-Egerv\'{a}ry, then $G$ is called a vertex (an edge)
almost K\"{o}nig-Egerv\'{a}ry graph (respectively). The \textit{critical
difference} $d(G)$ is $\max\{d(I):I\in\mathrm{Ind}(G)\}$, where $\mathrm{Ind}%
(G)$\ denotes the family of all independent sets of $G$. If $A\in
\mathrm{Ind}(G)$ with $d\left(  X\right)  =d(G)$, then $A$ is a
\textit{critical independent set} \cite{Zhang1990}. Let $\emph{diadem}%
(G)=\bigcup\{S:S$ is a critical independent set in $G\}$ \cite{JarLevMan2019},
and $\varrho_{v}\left(  G\right)  $ denote the number of vertices $v\in
V\left(  G\right)  $, such that $G-v$ is a K\"{o}nig-Egerv\'{a}ry graph
\cite{LevMan2024}.

In this paper, we characterize all types of almost K\"{o}nig-Egerv\'{a}ry
graphs and present interrelationships between them. We also show that if $G$
is a $1$-K\"{o}nig-Egerv\'{a}ry graph, then $\varrho_{v}\left(  G\right)  \leq
n\left(  G\right)  +d\left(  G\right)  -\xi\left(  G\right)  -\beta(G)$, where
$\beta(G)=\left\vert \emph{diadem}(G)\right\vert $. As an
application,{\LARGE ~}we characterize the $1$-K\"{o}nig-Egerv\'{a}ry graphs
that become K\"{o}nig-Egerv\'{a}ry after deleting any vertex.

\textbf{Keywords:} maximum independent set, critical independent set, maximum
matching, K\"{o}nig-Egerv\'{a}ry graph.

\end{abstract}

\section{Introduction}

Throughout this paper $G=(V,E)$ is a finite, undirected, loopless graph
without multiple edges, with vertex set $V=V(G)$ of cardinality $\left\vert
V\left(  G\right)  \right\vert =n\left(  G\right)  $, and edge set $E=E(G)$ of
size $\left\vert E\left(  G\right)  \right\vert =m\left(  G\right)  $.

If $X\subset V$, then $G[X]$ is the subgraph of $G$ induced by $X$. By $G-v$
we mean the subgraph $G[V-\left\{  v\right\}  ]$, for $v\in V$. The
\textit{neighborhood} of a vertex $v\in V$ is the set $N(v)=\{w:w\in V$ and
$vw\in E\}$. The \textit{neighborhood} of $A\subseteq V$ is $N(A)=\{v\in
V:N(v)\cap A\neq\emptyset\}$, and $N\left[  A\right]  =A\cup N(A)$, or
$N_{G}(A)$ and $N_{G}\left[  A\right]  $, if we specify the graph. If
$A,B\subset V$ are disjoint, then $\left(  A,B\right)  =\left\{  ab:ab\in
E,a\in A,b\in B\right\}  $.

A set $S\subseteq V(G)$ is \textit{independent} if no two vertices from $S$
are adjacent, and by $\mathrm{Ind}(G)$ we mean the family of all the
independent sets of $G$. An independent set of maximum size is a
\textit{maximum independent set} of $G$, and $\alpha(G)=\max\{\left\vert
S\right\vert :S\in\mathrm{Ind}(G)\}$. Let $\Omega(G)$ be the family of all
maximum independent sets, and $\mathrm{core}(G)=\bigcap\{S:S\in\Omega(G)\}$,
while $\xi(G)=\left\vert \mathrm{core}(G)\right\vert $ \cite{LevMan2002a}. A
vertex $v\in V(G)$ is $\alpha$-\textit{critical }provided $\alpha
(G-v)<\alpha(G)$. Clearly, $\mathrm{core}(G)$ is the set of all $\alpha
$-critical vertices of $G$. An edge $e\in E(G)$ is $\alpha$-\textit{critical
}provided $\alpha(G)<\alpha(G-e)$. Notice that there are graphs in which every
edge is $\alpha$-critical (e.g., all $C_{2k+1}$ for $k\geq1$) or no edge is
$\alpha$-critical (e.g., all $C_{2k}$ for $k\geq2$).

For $X\subseteq V(G)$, the number $\left\vert X\right\vert -\left\vert
N(X)\right\vert $ is the \textit{difference} of $X$, denoted $d(X)$. The
\textit{critical difference} $d(G)$ is $\max\{d(I):I\in\mathrm{Ind}(G)\}$. If
$A\in\mathrm{Ind}(G)$ with $d\left(  X\right)  =d(G)$, then $A$ is a
\textit{critical independent set} \cite{Zhang1990}. For a graph $G$, let
$\mathrm{MaxCritIndep}(G)=\{S:S$ \textit{is a maximum critical independent
set}$\}$ \cite{LevMan2022c}, $\mathrm{\ker}(G)$ be the intersection of all its
critical independent sets and $\varepsilon(G)=|\mathrm{\ker}(G)|$
\cite{LevMan2012a,LevMan2012c}. The \textit{critical independence number} of a
graph $G$, denoted as $\alpha^{\prime}\left(  G\right)  $, is the cardinality
of a maximum critical independent set \cite{Larson2011}.

\begin{theorem}
\label{th333}\emph{(i)} \cite{ButTruk2007} Each critical independent set is
included in some $S\in\Omega(G)$.

\emph{(ii)} \cite{Larson2007} Every critical independent set is contained in
some $S\in\mathrm{MaxCritIndep}(G)$.

\emph{(iii)} \cite{Larson2007} There is a matching from $N(S)$ into $S$ for
every critical independent set $S$.\bigskip
\end{theorem}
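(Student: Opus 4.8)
The plan is to establish the three items in the order (iii), (i), (ii), since the matching statement (iii) is the engine that drives the other two. For (iii), let $S$ be a critical independent set, so that $d(S)=|S|-|N(S)|=d(G)$ is maximal. I would invoke the defect form of Hall's theorem: a matching of $N(S)$ into $S$ exists unless there is a set $T\subseteq N(S)$ with $|N(T)\cap S|<|T|$. Assuming such a $T$ exists, put $A=N(T)\cap S$ and pass to the independent set $S\setminus A$. Every vertex of $T$ has all of its neighbours in $S$ lying in $A$, hence $T\cap N(S\setminus A)=\emptyset$ and $N(S\setminus A)\subseteq N(S)\setminus T$. Consequently $d(S\setminus A)\geq(|S|-|A|)-(|N(S)|-|T|)=d(G)+(|T|-|A|)>d(G)$, contradicting the maximality of $d(G)$. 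Thus Hall's condition holds and (iii) follows.

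For (i), I would use the matching $M\colon N(S)\hookrightarrow S$ furnished by (iii) to bound an arbitrary $W\in\Omega(G)$ against the candidate set $S\cup I$, where $I$ is a maximum independent set of $G-N[S]$. The set $S\cup I$ is independent because $I$ avoids $N[S]$. Writing $W=(W\cap S)\cup(W\cap N(S))\cup(W\setminus N[S])$, the third block satisfies $|W\setminus N[S]|\leq\alpha(G-N[S])=|I|$, while $M$ sends $W\cap N(S)$ injectively into $S$, landing in $S\setminus W$ because a matched vertex is adjacent to a vertex of the independent set $W$ and so cannot itself lie in $W$; this yields $|W\cap S|+|W\cap N(S)|\leq|S|$. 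Adding the two estimates gives $|W|\leq|S|+|I|=|S\cup I|$, so $S\cup I$ is a maximum independent set containing $S$.

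The remaining item (ii) is where I expect the real difficulty. The natural tool is the supermodularity of the difference, $d(X\cup Y)+d(X\cap Y)\geq d(X)+d(Y)$, which is equivalent to the submodularity of $X\mapsto|N(X)|$; using that $d$ attains its maximum over all subsets at an independent set (whence $d(X\cup Y)\leq d(G)$), applying this to two critical sets forces $d(X\cap Y)=d(G)$, so the intersection is again critical. The obstruction is that the union of two critical independent sets need not be independent, already the two colour classes of $K_{3,3}$ are critical with a non-independent union, so one cannot simply take the union of all critical sets containing a given $S$. To circumvent this I would fix a maximum critical independent set $D$ and study the independent set $S\cup(D\setminus N[S])$; a matching count via (iii) applied to $S$ shows that $|S\cup(D\setminus N[S])|\geq|D|$, and the crux is to verify that this set is actually critical, in which case it is a maximum critical independent set containing $S$. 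The cleanest way I know to force criticality past the non-independence is to lift the problem to the bipartite double cover of $G$, where critical independent sets correspond to the sides of minimum cuts; there K\"{o}nig's theorem applies and the minimisers of the associated submodular cut function form a lattice closed under union and intersection, so each critical set does embed into a maximum one. Pulling this lattice structure back to $G$ is the step I expect to be the main obstacle, precisely because the convenient union operation becomes available only after the bipartite lift.
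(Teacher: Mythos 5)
First, a framing remark: the paper does not prove Theorem \ref{th333} at all; it is stated as background and attributed to \cite{ButTruk2007} and \cite{Larson2007}, so there is no in-paper argument to compare you against. Judged on its own terms, your proof of part \emph{(iii)} is correct and complete: if the defect-Hall condition fails for some $T\subseteq N(S)$, then with $A=N(T)\cap S$ the set $S\setminus A$ is independent, satisfies $N(S\setminus A)\subseteq N(S)\setminus T$, and has difference strictly larger than $d(G)$, a contradiction. Your proof of part \emph{(i)} is also correct: the matching from \emph{(iii)} sends $W\cap N(S)$ injectively into $S\setminus W$, which together with $\left\vert W\setminus N[S]\right\vert \leq\alpha(G-N[S])$ yields $\left\vert W\right\vert \leq\left\vert S\cup I\right\vert$, so $S\cup I$ is a maximum independent set containing $S$. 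These are essentially the published arguments.

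Part \emph{(ii)}, however, is not proved. You correctly identify the candidate $K=S\cup(D\setminus N[S])$ for a fixed maximum critical independent set $D$, and the cardinality bound $\left\vert K\right\vert \geq\left\vert D\right\vert$ does follow from \emph{(iii)}, since each vertex of $D\cap N(S)$ is matched to a distinct vertex of $S\setminus D$. But the criticality of $K$ is precisely the content of the claim, and you leave it open: the crude estimate $N(K)\subseteq N(S)\cup N(D)$ is too lossy to give $d(K)\geq d(G)$ (the term $\left\vert D\cap N(S)\right\vert$ it produces is not absorbed by $\left\vert N(S)\cap N(D)\right\vert$, because $D\cap N(S)$ is disjoint from $N(D)$); the supermodularity argument only shows that the possibly non-independent set $S\cup D$ is critical, and its difference does not transfer to $K$ without further work; and the proposed detour through the bipartite double cover ends with your own admission that pulling the lattice of minimizers back to $G$ is ``the main obstacle.'' As written, one of the three assertions rests on an argument you have not supplied. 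To close it you would need either to carry out the double-cover correspondence in detail (where K\"{o}nig's theorem does make the relevant maximizers a lattice closed under union and intersection) or to reproduce Larson's direct argument from \cite{Larson2007}.
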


\begin{theorem}
\label{th100}\cite{Larson2011} For any graph $G$, there is a unique set
$X\subseteq V(G)$ such that

\emph{(i)} $\alpha(G)=\alpha\left(  G\left[  X\right]  \right)  +\alpha\left(
G\left[  V\left(  G\right)  -X\right]  \right)  $;

\emph{(ii)} $X=N\left[  A\right]  $\ for every $A\in\mathrm{MaxCritIndep}(G)$; \ 

\emph{(iii)} $G\left[  X\right]  $ is a \textit{K\"{o}nig-Egerv\'{a}ry} graph.

\emph{(iv) }$G\left[  V\left(  G\right)  -X\right]  $ has only $\emptyset$ as
a critical independent set.
\end{theorem}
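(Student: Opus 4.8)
The plan is to fix one maximum critical independent set $A$, set $X:=N[A]$, verify (i), (iii), (iv) for this $X$, and then prove (ii) — that $N[A']=X$ for every $A'\in\mathrm{MaxCritIndep}(G)$ — which at the same time delivers the uniqueness. Since $A$ is independent, $X=A\cup N(A)$ is a disjoint union, so $|X|=|A|+|N(A)|$. I would first do (iii): Theorem \ref{th333}(iii) supplies a matching of $N(A)$ into $A$, i.e. a matching of size $|N(A)|$ inside $G[X]$, so $\mu(G[X])\ge|N(A)|$, while $\alpha(G[X])\ge|A|$. Because the complement of a maximum independent set is a vertex cover, one has $\mu(H)\le n(H)-\alpha(H)$, hence $\alpha(H)+\mu(H)\le n(H)$ for every $H$; applied to $G[X]$ this squeezes $\alpha(G[X])=|A|$, $\mu(G[X])=|N(A)|$, and $\alpha(G[X])+\mu(G[X])=|X|$, so $G[X]$ is K\"onig-Egerv\'ary. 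For (i), the bound $\alpha(G)\le\alpha(G[X])+\alpha(G[V(G)-X])$ is immediate by intersecting a maximum independent set with $X$ and with $V(G)-X$; the reverse holds because vertices of $V(G)-X=V(G)-N[A]$ have no neighbor in $A$, so $A$ together with a maximum independent set of $G[V(G)-X]$ is independent in $G$ and has size $|A|+\alpha(G[V(G)-X])=\alpha(G[X])+\alpha(G[V(G)-X])$.

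For (iv) I would argue by contradiction with the maximality of $A$. Suppose $C\neq\emptyset$ is critical in $G[V(G)-X]$; then $d_{G[V(G)-X]}(C)=d(G[V(G)-X])\ge 0$, so $|N_{G[V(G)-X]}(C)|\le|C|$. As $C$ has no neighbor in $A$, the set $A\cup C$ is independent, and every neighbor of $C$ lying outside $N(A)$ must sit in $V(G)-X$; hence $|N_G(A\cup C)|\le|N(A)|+|C|$ and $d_G(A\cup C)\ge|A|-|N(A)|=d(G)$. Independence forces equality, so $A\cup C$ is a critical independent set strictly larger than $A$, a contradiction. Thus $G[V(G)-X]$ has only $\emptyset$ as a critical set, and in particular $d(G[V(G)-X])=0$.

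The crux — and the step I expect to be the main obstacle — is the key lemma $d(G[X])=d(G)$; the inequality $d(G[X])\ge d(G)$ is trivial since $d_{G[X]}(A)=d_G(A)=d(G)$, but the reverse needs real structure. I would isolate it as a statement about K\"onig-Egerv\'ary graphs: for any K\"onig-Egerv\'ary graph $H$ one has $d(H)=2\alpha(H)-n(H)$. To prove this, take a critical independent set $I$ of $H$, extend it to a maximum independent set $S$ via Theorem \ref{th333}(i), note $N_H(S)=V(H)\setminus S$ so $d_H(S)=2\alpha(H)-n(H)$, and use the K\"onig-Egerv\'ary property that some maximum matching injects $V(H)\setminus S$ into $S$ (each matching edge has exactly one end in the minimum cover $V(H)\setminus S$). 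A matched vertex $w\in(V(H)\setminus S)\setminus N_H(I)$ cannot be matched into $I$, since that would put $w\in N_H(I)$; so it is matched into $S\setminus I$, giving $|(V(H)\setminus S)\setminus N_H(I)|\le|S\setminus I|$, which is precisely $d_H(I)\le d_H(S)$. Applied to $G[X]$, with $\alpha(G[X])=|A|$ and $n(G[X])=|A|+|N(A)|$, this yields $d(G[X])=|A|-|N(A)|=d(G)$.

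Finally I would settle (ii) and uniqueness. For any $A'\in\mathrm{MaxCritIndep}(G)$, split $A'=C_1\cup C_2$ with $C_1=A'\cap X$ and $C_2=A'\cap(V(G)-X)$; partitioning $N_G(A')$ across $X$ and $V(G)-X$ gives $d_{G[X]}(C_1)+d_{G[V(G)-X]}(C_2)\ge d_G(A')=d(G)$. Since $d_{G[X]}(C_1)\le d(G[X])=d(G)$ and $d_{G[V(G)-X]}(C_2)\le d(G[V(G)-X])=0$, we must have $d_{G[V(G)-X]}(C_2)=0$, so $C_2$ is critical in $G[V(G)-X]$ and hence $C_2=\emptyset$ by (iv); thus $A'\subseteq X$. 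Then $A'$ is a maximum independent set of the K\"onig-Egerv\'ary graph $G[X]$, so $N_{G[X]}(A')=X\setminus A'$ has size $|N(A)|$, while $d_G(A')=d(G)$ forces $|N_G(A')|=|N(A)|$; therefore $N_G(A')=N_{G[X]}(A')\subseteq X$ and $N[A']=X$. This shows $X$ is independent of the choice of $A$, which is (ii) and simultaneously establishes that the set $X$ with properties (i)–(iv) is unique.
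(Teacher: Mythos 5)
The paper does not prove Theorem \ref{th100}: it is imported verbatim from \cite{Larson2011} as a known result, so there is no in-paper argument to compare yours against. Judged on its own, your proof is correct and self-contained, and it uses only tools the paper already provides (Theorem \ref{th333}\emph{(i)},\emph{(iii)} and Theorem \ref{th715}\emph{(iii)}). The verification of \emph{(iii)} by squeezing $\alpha(G[X])=|A|$, $\mu(G[X])=|N(A)|$ against $\alpha+\mu\leq n$ is clean; the contradiction argument for \emph{(iv)} correctly exploits that $N_G(C)\cap A=\emptyset$ for $C\subseteq V(G)-N[A]$, so that $A\cup C$ would be a larger critical independent set; and you correctly identify the genuine crux, namely $d(G[X])=d(G)$, which you reduce to the statement that $d(H)=2\alpha(H)-n(H)$ for every K\"{o}nig--Egerv\'{a}ry graph $H$ --- your matching argument (a vertex of $(V(H)\setminus S)\setminus N_H(I)$ must be matched into $S\setminus I$) is a valid proof of that lemma, and it is essentially the content of Theorem \ref{th715}\emph{(iv)} read quantitatively. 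The derivation of \emph{(ii)} by splitting $A'$ across $X$ and $V(G)-X$ and using superadditivity of the difference function over the partition, together with $d(G[V(G)-X])=0$, is sound, and since $\mathrm{MaxCritIndep}(G)\neq\emptyset$ (the empty set is always independent with difference $0$, so critical sets exist), property \emph{(ii)} alone pins $X$ down, which settles uniqueness. I find no gap.
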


Let $\emph{diadem}(G)=\bigcup\{S:S$ is a critical independent set in $G\}$
\cite{JarLevMan2019}, while $\beta(G)=\left\vert \emph{diadem}(G)\right\vert $.

\begin{theorem}
\label{th444}\cite{LevMan2012a} If $A$ and $B$ are critical sets in a graph
$G$, then $A\cup B$ and $A\cap B$ are critical as well.
\end{theorem}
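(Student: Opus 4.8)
The plan is to regard $d$ as the set function $d(X)=\left\vert X\right\vert -\left\vert N(X)\right\vert$ defined for every $X\subseteq V(G)$, and to prove the single supermodular inequality
\[
d(A\cup B)+d(A\cap B)\geq d(A)+d(B),
\]
valid for all $A,B\subseteq V(G)$. Once this is in hand the theorem follows by a squeezing argument: since $A$ and $B$ are critical, $d(A)=d(B)=d(G)$, so the right-hand side equals $2d(G)$; since $d(G)$ is the maximum value of $d$, each of $d(A\cup B)$ and $d(A\cap B)$ is at most $d(G)$ (a point examined in the last paragraph), so the left-hand side is at most $2d(G)$. Both inequalities must therefore be equalities, which forces $d(A\cup B)=d(A\cap B)=d(G)$; that is, $A\cup B$ and $A\cap B$ are again critical.

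To prove the supermodular inequality I would split $d$ into its two parts. The cardinality term is exactly modular, $\left\vert A\cup B\right\vert +\left\vert A\cap B\right\vert =\left\vert A\right\vert +\left\vert B\right\vert$, so it is enough to show the opposite (submodular) inequality for the neighbourhood sizes,
\[
\left\vert N(A\cup B)\right\vert +\left\vert N(A\cap B)\right\vert \leq\left\vert N(A)\right\vert +\left\vert N(B)\right\vert .
\]
This reduces to two elementary facts about neighbourhoods. First, $N(A\cup B)=N(A)\cup N(B)$, with genuine equality, because a vertex has a neighbour in $A\cup B$ exactly when it has one in $A$ or in $B$. Second, $N(A\cap B)\subseteq N(A)\cap N(B)$, since any neighbour of a vertex of $A\cap B$ is simultaneously a neighbour of $A$ and of $B$; this inclusion can be proper, but only the ``$\leq$'' direction on cardinalities is needed. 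Feeding $\left\vert N(A\cup B)\right\vert =\left\vert N(A)\right\vert +\left\vert N(B)\right\vert -\left\vert N(A)\cap N(B)\right\vert$ and $\left\vert N(A\cap B)\right\vert \leq\left\vert N(A)\cap N(B)\right\vert$ into the left-hand side yields the claimed neighbourhood inequality, and hence supermodularity.

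The step that deserves the most care, and what I expect to be the main obstacle, is justifying the upper bound $d(A\cup B)\leq d(G)$ used in the squeeze, together with the interpretation of ``critical.'' For the intersection there is nothing to worry about: $A\cap B\subseteq A$ is independent, so $d(A\cap B)\leq d(G)$ holds by definition and $A\cap B$ is an honest critical independent set. The union is subtler, since $A\cup B$ need not be independent a priori. Here I would lean on the principle that the maximum of $d$ over all subsets is already attained on independent sets, so that $d(X)\leq d(G)$ for every $X\subseteq V(G)$ and the squeeze goes through; if in addition one wants $A\cup B$ to be a critical \emph{independent} set, one argues that the tightness $d(A\cup B)=d(G)$ is incompatible with an internal edge $uv$ (necessarily $u\in A\setminus B$, $v\in B\setminus A$), exploiting the matchings of $N(A)$ into $A$ and of $N(B)$ into $B$ guaranteed by Theorem \ref{th333}(iii). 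The supermodular squeeze itself is routine; it is only this independence/attainment point that requires a careful supporting lemma.
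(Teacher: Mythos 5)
The paper itself does not prove this statement -- it imports it from \cite{LevMan2012a} -- and your argument is essentially the standard one from that source: the supermodular inequality $d(A\cup B)+d(A\cap B)\geq d(A)+d(B)$, obtained from the modularity of cardinality together with $N(A\cup B)=N(A)\cup N(B)$ and $N(A\cap B)\subseteq N(A)\cap N(B)$, followed by the squeeze against the maximum $d(G)$. That computation is correct, and you have correctly isolated the one load-bearing external fact: since the paper defines $d(G)$ as a maximum over \emph{independent} sets only, the bound $d(A\cup B)\leq d(G)$ for a possibly non-independent union requires knowing that $\max\{d(X):X\subseteq V(G)\}=\max\{d(I):I\in\mathrm{Ind}(G)\}$. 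This is not a ``principle'' to lean on informally; it is precisely Zhang's theorem \cite{Zhang1990} and should be cited or proved, since without it the squeeze only yields $d(A\cup B)\geq d(G)$.

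Your closing suggestion, however, overreaches: the claim that tightness forces $A\cup B$ to be independent is false, so no argument via the matchings of Theorem \ref{th333}(iii) can establish it. Take $G=K_{2}$ with $V=\{u,v\}$: then $d(G)=0$, both $\{u\}$ and $\{v\}$ are critical independent sets, and their union is $V(G)$, which has $d(V(G))=0=d(G)$ but is not independent (tightness indeed holds throughout the chain, e.g.\ $N(\{u\}\cap\{v\})=\emptyset=N(\{u\})\cap N(\{v\})$, with no contradiction). This is why the theorem asserts only that $A\cup B$ is \emph{critical}, not critical \emph{independent}, and that weaker conclusion is all the paper uses later (to get $d(G)=\left\vert \emph{diadem}(G)\right\vert -\left\vert N(\emph{diadem}(G))\right\vert$). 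So: the main argument is the right one and is complete once Zhang's theorem is invoked; delete the final independence claim, which is both unnecessary and unprovable.
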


A \textit{matching} in a graph $G=(V,E)$ is a set of edges $M\subseteq E$ such
that no two edges of $M$ share a common vertex. A matching of maximum
cardinality $\mu(G)$ is a \textit{maximum matching}, and a \textit{perfect
matching} is one saturating all vertices of $G$. Given a matching $M$ in $G$,
a vertex $v\in V$ is called $M$-saturated if there exists an edge $e\in M$
incident with $v$.

An edge $e\in E(G)$ is $\mu$-\textit{critical }provided $\mu(G-e)<\mu(G)$. A
vertex $v\in V(G)$ is $\mu$-\textit{critical (essential) }provided
$\mu(G-v)<\mu(G)$, i.e., $v$ is $M$-saturated by every maximum matching $M$ of
$G$.

It is known that%
\[
\lfloor n\left(  G\right)  /2\rfloor+1\leq\alpha(G)+\mu(G)\leq n\left(
G\right)  \leq\alpha(G)+2\mu(G)
\]
hold for every graph $G$ \cite{BGL2002}. If $\alpha(G)+\mu(G)=n\left(
G\right)  $, then $G$ is called a K\"{o}nig-Egerv\'{a}ry graph\textit{
}\cite{dem,ster}.If $S$ is an independent set of a graph $G$ and $A=V\left(
G\right)  -S$, then we write $G=S\ast A$. For instance, if $E(G[A])=\emptyset
$, then $G=S\ast A$ is bipartite; if $G[A]$ is a complete graph, then $G=S\ast
A$ is a split graph.

\begin{theorem}
\label{th715}For a graph $G$, the following properties are equivalent:

\emph{(i)} $G$ is a \textit{K\"{o}nig-Egerv\'{a}ry graph};

\emph{(ii) }\cite{LevMan2002a} $G=S\ast A$, where $S\in$ $\mathrm{Ind}(G)$,
$\left\vert S\right\vert \geq\left\vert A\right\vert $, and $\left(
S,A\right)  $ contains a matching $M$ with $\left\vert M\right\vert
=\left\vert A\right\vert $;

\emph{(iii) }\cite{LevMan2013b} each maximum matching of $G$ matches $V\left(
G\right)  -S$ into $S$, for every $S\in\Omega(G)$;

\emph{(iv)} \cite{LevMan2012a} every $S\in%
%TCIMACRO{\U{3a9} }%
%BeginExpansion
\Omega
%EndExpansion
(G)$ is critical.
\end{theorem}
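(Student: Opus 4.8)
The plan is to establish the cyclic chain $(i)\Rightarrow(iii)\Rightarrow(ii)\Rightarrow(i)$ for the first three conditions, and then to close the equivalence with $(iv)$ by proving $(iv)\Rightarrow(i)$ and $(i)\Rightarrow(iv)$ separately. Throughout I would lean on the two standard facts that $\alpha(G)+\tau(G)=n(G)$ (Gallai, with $\tau(G)$ the minimum size of a vertex cover) and $\mu(G)\le\tau(G)$, so that $\alpha(G)+\mu(G)\le n(G)$ always holds, with equality --- i.e. $G$ is \emph{K\"{o}nig-Egerv\'{a}ry} --- exactly when $\mu(G)=\tau(G)$. I would also record the structural observation that drives everything: since any $S\in\Omega(G)$ is a maximal independent set, every vertex outside $S$ has a neighbor in $S$, so $N(S)=V(G)-S=:A$, and hence $d(S)=\alpha(G)-|A|=2\alpha(G)-n(G)$ for every $S\in\Omega(G)$.

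For $(i)\Rightarrow(iii)$ the key is a short counting argument. Fix $S\in\Omega(G)$, put $A=V(G)-S$, so that $|A|=n(G)-\alpha(G)=\mu(G)$, and let $M$ be any maximum matching, $|M|=\mu(G)$. Since $S$ is independent, no edge of $M$ lies inside $S$; write $M=M_A\cup M_{SA}$, splitting $M$ into edges with both ends in $A$ and edges joining $A$ to $S$. The number of vertices of $A$ saturated by $M$ equals $2|M_A|+|M_{SA}|=|M|+|M_A|=\mu(G)+|M_A|$, and this cannot exceed $|A|=\mu(G)$; hence $|M_A|=0$ and all of $A$ is saturated. Thus $M$ consists of exactly $|A|$ edges matching $A$ into $S$, which is precisely $(iii)$.

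The implications $(iii)\Rightarrow(ii)\Rightarrow(i)$ are then direct. Given $(iii)$, a maximum matching $M$ matches $A$ injectively into $S$, so $M\subseteq(S,A)$, $|M|=|A|$, and $|A|\le|S|$; this is the decomposition $G=S\ast A$ demanded by $(ii)$. Conversely, from $(ii)$ the set $A$ is a vertex cover (because $S$ is independent), so $\tau(G)\le|A|$, while the matching of size $|A|$ inside $(S,A)$ gives $|A|\le\mu(G)\le\tau(G)$; hence $\mu(G)=\tau(G)=|A|$ and $\alpha(G)=n(G)-\tau(G)=|S|$, so $\alpha(G)+\mu(G)=n(G)$, which is $(i)$.

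Finally I would treat $(iv)$. The direction $(iv)\Rightarrow(i)$ is clean: any $S\in\Omega(G)$ is by hypothesis critical, so Theorem~\ref{th333}(iii) supplies a matching from $N(S)=A$ into $S$ saturating $A$, of size $|A|=n(G)-\alpha(G)$; thus $\mu(G)\ge n(G)-\alpha(G)$, and with the reverse inequality this yields $(i)$. The converse $(i)\Rightarrow(iv)$ is where I expect the main obstacle. Since $d(S)=2\alpha(G)-n(G)$ for every $S\in\Omega(G)$ while $d(G)\ge 2\alpha(G)-n(G)$ automatically, it suffices to show $d(I)\le 2\alpha(G)-n(G)$ for \emph{every} independent set $I$. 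Using the matching $M$ from $(iii)$ that injects $A$ into $S$, I would send $I\cap A$ to its $M$-partners in $S$ and the $M$-saturated part of $I\cap S$ to its $M$-partners in $A$, producing two disjoint families inside $N(I)$; counting them, together with the fact that the number of $M$-unsaturated vertices of $S$ is exactly $2\alpha(G)-n(G)$, forces $d(I)\le 2\alpha(G)-n(G)$. Then $d(G)=2\alpha(G)-n(G)=d(S)$ for all $S\in\Omega(G)$, i.e. every maximum independent set is critical. The bookkeeping in this last count --- verifying that the two neighbor-families are disjoint and correctly bounding the saturated portion of $I\cap S$ --- is the step that needs the most care.
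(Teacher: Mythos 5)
The paper does not prove Theorem \ref{th715}: it is stated as a compilation of known results, with items \emph{(ii)}, \emph{(iii)}, \emph{(iv)} each attributed to an earlier reference, so there is no in-paper argument to compare yours against. Judged on its own, your proof is correct and self-contained. The cycle $(i)\Rightarrow(iii)\Rightarrow(ii)\Rightarrow(i)$ works: the count $2|M_A|+|M_{SA}|\le |A|=\mu(G)$ forces $M_A=\emptyset$ and full saturation of $A$; the step $(ii)\Rightarrow(i)$ via $A$ being a vertex cover and Gallai's identity is standard and sound. For $(iv)\Rightarrow(i)$ you correctly invoke Theorem \ref{th333}\emph{(iii)} together with the maximality of $S$ (so that $N(S)=V(G)-S$). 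The step you flagged as delicate, $(i)\Rightarrow(iv)$, also goes through: the $M$-partners of $I\cap A$ lie in $S$ and the $M$-partners of the saturated part of $I\cap S$ lie in $A$, so the two families are automatically disjoint, each is injectively indexed because $M$ is a matching, both sit inside $N(I)$, and the unsaturated part of $I\cap S$ has size at most $|S|-|M|=2\alpha(G)-n(G)$; hence $d(I)\le 2\alpha(G)-n(G)=d(S)$ for every independent $I$ and every $S\in\Omega(G)$. One small point worth making explicit is that $d(G)\ge d(\emptyset)=0$ is consistent with this bound precisely because $\alpha(G)\ge\mu(G)$ forces $2\alpha(G)-n(G)\ge0$ in a K\"{o}nig--Egerv\'{a}ry graph, but nothing in the argument depends on it.
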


\begin{definition}
If $\alpha(G)+\mu(G)=n\left(  G\right)  -1$, then $G$ is called a
$1$-K\"{o}nig-Egerv\'{a}ry graph.
\end{definition}

For instance, both $G_{1}$ and $G_{2}$ from Figure \ref{fig123} are
$1$-K\"{o}nig-Egerv\'{a}ry graphs.

\begin{figure}[h]
\setlength{\unitlength}{1cm}\begin{picture}(5,1.2)\thicklines
\multiput(2,0)(1,0){4}{\circle*{0.29}}
\multiput(2,1)(1,0){3}{\circle*{0.29}}
\put(2,0){\line(1,0){3}}
\put(2,0){\line(0,1){1}}
\put(2,0){\line(1,1){1}}
\put(2,0){\line(2,1){2}}
\put(2,1){\line(1,-1){1}}
\put(2,1){\line(2,-1){2}}
\put(2,1){\line(3,-1){3}}
\put(3,0){\line(0,1){1}}
\put(3,0){\line(1,1){1}}
\put(3,1){\line(1,-1){1}}
\put(3,1){\line(2,-1){2}}
\put(4,0){\line(0,1){1}}
\put(4,1){\line(1,-1){1}}
\put(5.4,0){\makebox(0,0){$v_{1}$}}
\put(4.4,1){\makebox(0,0){$v_{2}$}}
\put(1,0.5){\makebox(0,0){$G_{1}$}}
\multiput(8,0)(1,0){5}{\circle*{0.29}}
\multiput(11,1)(1,0){2}{\circle*{0.29}}
\put(8,1){\circle*{0.29}}
\put(8,0){\line(1,0){4}}
\put(8,0){\line(0,1){1}}
\put(8,1){\line(1,-1){1}}
\put(11,1){\line(1,0){1}}
\put(10,0){\line(1,1){1}}
\put(12,0){\line(0,1){1}}
\put(9.5,0.2){\makebox(0,0){$e_{1}$}}
\put(10.55,0.2){\makebox(0,0){$e_{2}$}}
\put(7,0.5){\makebox(0,0){$G_{2}$}}
\end{picture}\caption{$G_{1}-v_{1}$, $G_{2}-e_{2}$ are K\"{o}nig-Egerv\'{a}ry
graphs, while $G_{1}-v_{2}$ and $G_{2}-e_{1}$ are not K\"{o}nig-Egerv\'{a}ry
graphs.}%
\label{fig123}%
\end{figure}
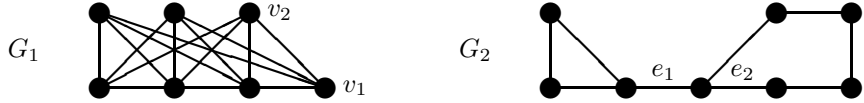

The \textit{K\"{o}nig deficiency} of graph $G$ is $\kappa\left(  G\right)
=n\left(  G\right)  -\left(  \alpha(G)+\mu(G)\right)  $ \cite{BartaKres2020}.
Thus, a graph $G$ is $1$-K\"{o}nig-Egerv\'{a}ry if and only if $\kappa\left(
G\right)  =1$. A subfamily of $1$-K\"{o}nig-Egerv\'{a}ry graphs was
characterized in \cite{DZ2021}, settling an open problem posed in
\cite{LevMan2019}.

\begin{definition}
A graph $G$ is called:

\emph{(i)} a vertex almost K\"{o}nig-Egerv\'{a}ry graph if $G$ is not
K\"{o}nig-Egerv\'{a}ry, but there is a vertex $v\in V(G)$, such that $G-v$ is
a K\"{o}nig-Egerv\'{a}ry graph \cite{LarsonPepper2011};

\emph{(ii)} an edge almost K\"{o}nig-Egerv\'{a}ry graph if $G$ is not a
K\"{o}nig-Egerv\'{a}ry graph, but there exists an edge $e\in E(G)$, such that
$G-e$ is a \textit{K\"{o}nig-Egerv\'{a}ry graph}.
\end{definition}

For example, the graph $G_{1}$ from Figure \ref{fig123} is vertex almost
K\"{o}nig-Egerv\'{a}ry but not edge almost K\"{o}nig-Egerv\'{a}ry, while
$G_{2}$ is edge almost K\"{o}nig-Egerv\'{a}ry but not vertex almost K\"{o}nig-Egerv\'{a}ry.

\begin{lemma}
\cite{LarsonPepper2011}\label{Larson lemma} A graph $G$ is a vertex almost
K\"{o}nig-Egerv\'{a}ry\textit{ graph} if and only if there is a vertex $v\in
V(G)$ such that $G-v$ is a \textit{K\"{o}nig-Egerv\'{a}ry graph},
$\alpha(G-v)=\alpha(G)$, and $%
%TCIMACRO{\U{3bc} }%
%BeginExpansion
\mu
%EndExpansion
(G-v)=%
%TCIMACRO{\U{3bc} }%
%BeginExpansion
\mu
%EndExpansion
(G)$.
\end{lemma}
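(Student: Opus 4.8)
The plan is to prove the two implications separately; the standard bounds on how $\alpha$ and $\mu$ behave under deletion of a single vertex do almost all the work. I would first record the facts I need: for any vertex $v$, deleting $v$ lowers the independence number by at most one and never raises it, so $\alpha(G-v)\in\{\alpha(G)-1,\alpha(G)\}$; likewise $\mu(G-v)\in\{\mu(G)-1,\mu(G)\}$. I would also invoke the universal inequality $\alpha(H)+\mu(H)\le n(H)$, which holds with equality precisely when $H$ is a K\"{o}nig-Egerv\'{a}ry graph.

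For the backward implication, assume some $v$ satisfies the three stated conditions. Since $G-v$ is K\"{o}nig-Egerv\'{a}ry we have $\alpha(G-v)+\mu(G-v)=n(G-v)=n(G)-1$; substituting $\alpha(G-v)=\alpha(G)$ and $\mu(G-v)=\mu(G)$ yields $\alpha(G)+\mu(G)=n(G)-1<n(G)$. Thus $G$ is not K\"{o}nig-Egerv\'{a}ry while $G-v$ is, which is exactly the definition of a vertex almost K\"{o}nig-Egerv\'{a}ry graph.

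For the forward implication, pick a vertex $v$ with $G-v$ a K\"{o}nig-Egerv\'{a}ry graph, so $\alpha(G-v)+\mu(G-v)=n(G)-1$. Because $G$ is not K\"{o}nig-Egerv\'{a}ry, the universal upper bound forces $\alpha(G)+\mu(G)\le n(G)-1$. I would then run through the combinations permitted by the deletion bounds. If both parameters drop, then $\alpha(G-v)+\mu(G-v)=\alpha(G)+\mu(G)-2\le n(G)-3$, contradicting the value $n(G)-1$. If exactly one of them drops, substitution forces $\alpha(G)+\mu(G)=n(G)$, making $G$ a K\"{o}nig-Egerv\'{a}ry graph, again a contradiction. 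Hence the remaining case $\alpha(G-v)=\alpha(G)$, $\mu(G-v)=\mu(G)$ must hold, completing the proof.

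The argument involves no deep obstacle; it is a finite case check resting on two elementary monotonicity facts. The only point requiring care is to use the non-K\"{o}nig-Egerv\'{a}ry hypothesis in its sharp form $\alpha(G)+\mu(G)\le n(G)-1$, since it is precisely this strict gap, together with the K\"{o}nig-Egerv\'{a}ry equality characterization, that eliminates the three unwanted cases and pins down the deletion as affecting neither $\alpha$ nor $\mu$.
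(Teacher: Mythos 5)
Your proof is correct, and it uses the same mechanism the paper relies on for its analogous internal results (Theorems \ref{th3} and \ref{th2}(i)): the deletion bounds of Lemma \ref{lem1} combined with the squeeze $n(G)-1=\alpha(G-v)+\mu(G-v)\leq\alpha(G)+\mu(G)<n(G)$, which forces $\alpha(G-v)=\alpha(G)$ and $\mu(G-v)=\mu(G)$. The paper itself states Lemma \ref{Larson lemma} as a cited result from \cite{LarsonPepper2011} without reproving it, so your case analysis is simply a slightly more explicit rendering of the same elementary argument.
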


Clearly, every odd cycle $C_{2k+1}$ is a vertex almost K\"{o}nig-Egerv\'{a}ry
graph, an edge almost K\"{o}nig-Egerv\'{a}ry graph and an almost
K\"{o}nig-Egerv\'{a}ry graph.

\begin{lemma}
\label{lem1}For a graph $G$, the following assertions hold:

\emph{(i)} $\alpha(G)\leq\alpha(G-e)\leq\alpha(G)+1$ for each $e\in E(G)$;

\emph{(ii)} $\alpha(G)-1\leq\alpha(G-v)\leq\alpha(G)$ for every $v\in V(G)$;

\emph{(iii)} $\mu(G)-1\leq\mu(G-a)\leq\mu(G)$ for each $a\in V(G)\cup E(G)$.
\end{lemma}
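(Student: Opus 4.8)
The plan is to establish all six inequalities by elementary monotonicity, since none of them invokes the K\"{o}nig-Egerv\'{a}ry structure. The two facts I will use repeatedly are that deleting a vertex or an edge produces a subgraph of $G$, and that an independent set (respectively, a matching) of a subgraph remains independent (respectively, a matching) in the larger graph; conversely, an optimal configuration in $G$ restricts to a configuration in the smaller graph after discarding at most one offending vertex or edge. Each bound then follows by a short case analysis, and the whole lemma reduces to bookkeeping of how many elements of an extremal set are destroyed by the deletion.

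For \emph{(i)}, write $e=xy$. Any $S\in\Omega(G)$ stays independent in $G-e$, so $\alpha(G-e)\geq\alpha(G)$. For the upper bound I would take $T\in\Omega(G-e)$ and split into two cases: if $T$ omits at least one of $x,y$, then $T$ is already independent in $G$, whence $\alpha(G)\geq\left\vert T\right\vert =\alpha(G-e)$; if $\{x,y\}\subseteq T$, then $T\setminus\{x\}\in\mathrm{Ind}(G)$ has size $\left\vert T\right\vert -1$, so $\alpha(G-e)\leq\alpha(G)+1$.

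For \emph{(ii)}, the inequality $\alpha(G-v)\leq\alpha(G)$ is immediate, because every independent set of $G-v$ is an independent set of $G$; for the other inequality, pick $S\in\Omega(G)$, and note that $S\setminus\{v\}$ is independent in $G-v$ with $\left\vert S\setminus\{v\}\right\vert \geq\alpha(G)-1$, so $\alpha(G-v)\geq\alpha(G)-1$. For \emph{(iii)}, I treat $a\in V(G)\cup E(G)$ uniformly: any matching of $G-a$ is a matching of $G$, giving $\mu(G-a)\leq\mu(G)$, and for the lower bound I fix a maximum matching $M$ of $G$ and observe that removing $a$ destroys at most one edge of $M$ --- the edge $a$ itself when $a$ is an edge, and the unique edge of $M$ incident with $a$ (if any) when $a$ is a vertex --- so discarding this single edge leaves a matching of $G-a$ of size at least $\mu(G)-1$. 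There is no genuine obstacle here; the only step repaying a moment's attention is the case split in \emph{(i)} according to whether a maximum independent set of $G-e$ contains both endpoints of $e$, since that is precisely where the $+1$ slack in the upper bound is attained.
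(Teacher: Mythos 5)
Your proof is correct in all six inequalities; the case split in \emph{(i)} on whether a maximum independent set of $G-e$ contains both endpoints of $e$, and the observation in \emph{(iii)} that deleting a vertex or an edge destroys at most one edge of a fixed maximum matching, are exactly the right points to isolate. The paper states this lemma without proof, treating it as folklore, so there is no authorial argument to compare against --- your elementary monotonicity argument is the standard justification and fills the gap faithfully.
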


Notice that: each edge of $C_{2q+1}$ is $\alpha$-critical and non-$\mu
$-critical; each vertex of $C_{2q+1}$ is neither $\alpha$-critical and nor
$\mu$-critical; each edge of $C_{2q}$ is neither $\alpha$-critical and nor
$\mu$-critical; each vertex of $C_{2q}$ is $\mu$-critical, but non-$\alpha$-critical.

Using Lemma \ref{lem1}, one can easily get the following.

\begin{corollary}
\label{cor25}The equality $\alpha(G-e)+\mu(G-e)=\alpha(G)+\mu(G)$ holds if and
only if either the edge $e\in E(G)$ is both $\alpha$-critical and $\mu
$-critical or the edge $e\in E(G)$ is both non-$\alpha$-critical and non-$\mu$-critical.
\end{corollary}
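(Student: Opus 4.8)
The plan is to reduce the claimed equivalence to a simple two-by-two case analysis driven entirely by the one-step bounds in Lemma \ref{lem1}. First I would rewrite the quantity under consideration additively as
\[
\bigl(\alpha(G-e)+\mu(G-e)\bigr)-\bigl(\alpha(G)+\mu(G)\bigr)=\bigl(\alpha(G-e)-\alpha(G)\bigr)+\bigl(\mu(G-e)-\mu(G)\bigr),
\]
so that the target equality holds exactly when the right-hand side vanishes.

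Next I would pin down the possible values of each summand. By Lemma \ref{lem1}\emph{(i)} the independence number can only stay the same or increase by one when an edge is deleted, so $\alpha(G-e)-\alpha(G)\in\{0,1\}$, and by definition this difference equals $1$ precisely when $e$ is $\alpha$-critical and equals $0$ otherwise. Symmetrically, by Lemma \ref{lem1}\emph{(iii)} the matching number can only stay the same or drop by one, so $\mu(G-e)-\mu(G)\in\{-1,0\}$, and this difference equals $-1$ precisely when $e$ is $\mu$-critical and equals $0$ otherwise.

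Finally I would combine these two facts. The sum of the two differences lies in $\{-1,0,1\}$, and it equals $0$ in exactly two configurations: when both differences are $0$ (i.e., $e$ is non-$\alpha$-critical and non-$\mu$-critical), and when they are $+1$ and $-1$ respectively (i.e., $e$ is simultaneously $\alpha$-critical and $\mu$-critical). In the remaining two mixed configurations the sum is $+1$ or $-1$, hence nonzero. This is precisely the stated dichotomy, so the equivalence follows.

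There is no genuine obstacle here; the only point requiring care is to keep the direction of each inequality from Lemma \ref{lem1} straight — deleting an edge can raise $\alpha$ but never lower it, and can lower $\mu$ but never raise it — so that the two critical conditions are correctly matched to the $+1$ and $-1$ contributions, and no sign gets flipped.
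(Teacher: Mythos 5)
Your proof is correct and is exactly the argument the paper intends: the paper gives no written proof, stating only that the corollary follows easily from Lemma \ref{lem1}, and your two-by-two case analysis of the increments $\alpha(G-e)-\alpha(G)\in\{0,1\}$ and $\mu(G-e)-\mu(G)\in\{-1,0\}$ is the standard way to fill that in. The signs and the matching of criticality to the $+1$ and $-1$ contributions are all handled correctly.
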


For instance, consider the graphs from Figure \ref{fig12345}:

\begin{itemize}
\item $\alpha(G_{1}-a)+\mu(G_{1}-a)=\alpha(G_{1})+\mu(G_{1})=6$, since $a$ is
both $\alpha$-critical and $\mu$-critical;

\item $\alpha(G_{2}-u_{1})+\mu(G_{2}-u_{1})=\alpha(G_{2})+\mu(G_{2})+1=7$,
since $u_{1}$ is $\alpha$-critical and non-$\mu$-critical;

\item $\alpha(G_{2}-u_{2})+\mu(G_{2}-u_{2})=\alpha(G_{2})+\mu(G_{2})=6$, since
$u_{2}$ is both non-$\alpha$-critical and non-$\mu$-critical;

\item $\alpha(G_{3}-b)+\mu(G_{3}-b)=\alpha(G_{3})+\mu(G_{3})-1=4$, since $b$
is non-$\alpha$-critical and $\mu$-critical.
\end{itemize}

\begin{figure}[h]
\setlength{\unitlength}{1cm}\begin{picture}(5,1.2)\thicklines
\multiput(1,0)(1,0){3}{\circle*{0.29}}
\multiput(1,1)(1,0){3}{\circle*{0.29}}
\put(1,0){\line(1,0){2}}
\put(1,0){\line(0,1){1}}
\put(1,1){\line(1,0){1}}
\put(2,1){\line(1,-1){1}}
\put(3,0){\line(0,1){1}}
\put(1.5,1.3){\makebox(0,0){$a$}}
\put(0.2,0.5){\makebox(0,0){$G_{1}$}}
\multiput(5,0)(1,0){4}{\circle*{0.29}}
\multiput(6,1)(1,0){3}{\circle*{0.29}}
\put(5,0){\line(1,0){3}}
\put(5,0){\line(1,1){1}}
\put(6,0){\line(0,1){1}}
\put(7,1){\line(1,-1){1}}
\put(7,1){\line(1,0){1}}
\put(8,0){\line(0,1){1}}
\put(5.15,0.6){\makebox(0,0){$u_{1}$}}
\put(7.2,0.5){\makebox(0,0){$u_{2}$}}
\put(4.2,0.5){\makebox(0,0){$G_{2}$}}
\multiput(10,0)(1,0){4}{\circle*{0.29}}
\multiput(11,1)(1,0){2}{\circle*{0.29}}
\put(10,0){\line(1,0){3}}
\put(10,0){\line(1,1){1}}
\put(11,0){\line(0,1){1}}
\put(12,1){\line(1,-1){1}}
\put(12,0){\line(0,1){1}}
\put(11.5,0.35){\makebox(0,0){$b$}}
\put(9.2,0.5){\makebox(0,0){$G_{3}$}}
\end{picture}\caption{Only $G_{1}$ is a K\"{o}nig-Egerv\'{a}ry graph.}%
\label{fig12345}%
\end{figure}
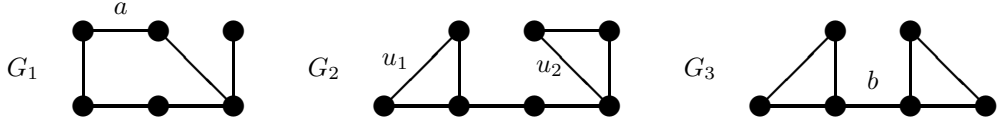

\begin{proposition}
\label{prop1_1}\cite{LevMan2006} In a K\"{o}nig-Egerv\'{a}ry graph $\alpha
$-critical edges are also $\mu$-critical, and they coincide in bipartite graphs.
\end{proposition}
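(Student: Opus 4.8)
The plan is to separate the statement into its two assertions and dispatch each using only the counting inequalities already assembled in the excerpt. Throughout I use that an edge $e$ is $\alpha$-critical exactly when $\alpha(G-e)=\alpha(G)+1$ (combining the definition with the upper bound in Lemma \ref{lem1}(i)), and that $e$ is $\mu$-critical exactly when $\mu(G-e)=\mu(G)-1$ (the lower bound in Lemma \ref{lem1}(iii)), while in all cases $n(G-e)=n(G)$.

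For the first assertion, fix a K\"{o}nig-Egerv\'{a}ry graph $G$, so $\alpha(G)+\mu(G)=n(G)$, and let $e$ be $\alpha$-critical, i.e. $\alpha(G-e)=\alpha(G)+1$. Apply the general bound $\alpha(H)+\mu(H)\le n(H)$ to $H=G-e$: this yields $\mu(G-e)\le n(G)-\alpha(G-e)=n(G)-\alpha(G)-1=\mu(G)-1$. Since Lemma \ref{lem1}(iii) gives the reverse inequality $\mu(G-e)\ge\mu(G)-1$, we conclude $\mu(G-e)=\mu(G)-1$, so $e$ is $\mu$-critical. Equivalently, an $\alpha$-critical but non-$\mu$-critical edge would satisfy $\alpha(G-e)+\mu(G-e)=\alpha(G)+1+\mu(G)=n(G)+1$, violating the general upper bound; Corollary \ref{cor25} simply repackages this contradiction.

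For the second assertion I would use that every bipartite graph is K\"{o}nig-Egerv\'{a}ry (K\"{o}nig's theorem, $\alpha+\mu=n$) and that deleting an edge preserves bipartiteness. Hence for bipartite $G$ \emph{both} $G$ and $G-e$ are K\"{o}nig-Egerv\'{a}ry, so $\alpha(G)+\mu(G)=n(G)=n(G-e)=\alpha(G-e)+\mu(G-e)$ holds for every $e\in E(G)$. By Corollary \ref{cor25}, this equality is equivalent to $e$ being simultaneously $\alpha$- and $\mu$-critical, or simultaneously non-$\alpha$- and non-$\mu$-critical; in either case $e$ is $\alpha$-critical if and only if it is $\mu$-critical, which is exactly the claimed coincidence.

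I expect no deep obstacle; the work is entirely in choosing the correct inequality at each step. The one point worth flagging is the asymmetry between the two claims. The first direction needs only the upper bound $\alpha+\mu\le n$ applied to $G-e$, and so survives in any K\"{o}nig-Egerv\'{a}ry graph. The coincidence, however, genuinely relies on $G-e$ itself remaining K\"{o}nig-Egerv\'{a}ry, which is guaranteed by bipartiteness but can fail in a general K\"{o}nig-Egerv\'{a}ry graph: there the converse implication ($\mu$-critical $\Rightarrow$ $\alpha$-critical) may break, as witnessed by a triangle with a pendant edge, whose pendant edge is $\mu$-critical yet non-$\alpha$-critical. This is precisely why bipartiteness cannot be weakened in the second part of the statement.
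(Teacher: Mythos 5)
Your proof is correct. Note, however, that the paper does not prove Proposition \ref{prop1_1} at all: it is quoted from \cite{LevMan2006} as a known result, so there is no in-paper argument to compare against. Your counting argument is sound and self-contained: for the first assertion, combining $\alpha(G-e)=\alpha(G)+1$ with the universal bound $\alpha(G-e)+\mu(G-e)\leq n(G-e)=n(G)=\alpha(G)+\mu(G)$ and Lemma \ref{lem1}\emph{(iii)} forces $\mu(G-e)=\mu(G)-1$; for the second, the fact that $G-e$ remains bipartite (hence K\"{o}nig-Egerv\'{a}ry) gives the equality $\alpha(G-e)+\mu(G-e)=\alpha(G)+\mu(G)$, and Corollary \ref{cor25} then yields the equivalence of $\alpha$-criticality and $\mu$-criticality. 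Your closing remark, with the triangle-plus-pendant example showing that a $\mu$-critical edge in a K\"{o}nig-Egerv\'{a}ry graph need not be $\alpha$-critical, correctly identifies why the coincidence cannot be extended beyond the bipartite case.
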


Let $\varrho_{v}\left(  G\right)  $ denote the number of vertices $v\in
V\left(  G\right)  $, such that $G-v$ is a K\"{o}nig-Egerv\'{a}ry graph, and
$\varrho_{e}\left(  G\right)  $ denote the number of edges $e\in E\left(
G\right)  $ satisfying $G-e$ is a K\"{o}nig-Egerv\'{a}ry graph
\cite{LevMan2024}.

\begin{theorem}
\label{th9}\cite{LevMan2024} If $G$ is a K\"{o}nig-Egerv\'{a}ry graph, then%
\[
\varrho_{v}\left(  G\right)  =n\left(  G\right)  -\xi\left(  G\right)
+\varepsilon\left(  G\right)  \text{ and }\varrho_{e}\left(  G\right)  \leq
m\left(  G\right)  -\xi\left(  G\right)  +\varepsilon\left(  G\right)  .
\]

\end{theorem}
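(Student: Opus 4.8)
The plan is to prove both equalities/inequalities by first turning each into a counting problem about $\alpha$- and $\mu$-critical vertices (respectively edges), and then identifying the relevant count with $\xi(G)-\varepsilon(G)$ through the set $\ker(G)$.

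First, the vertex reduction. Since $G$ is K\"onig-Egerv\'ary, $\alpha(G)+\mu(G)=n(G)$, and $n(G-v)=n(G)-1$ for every $v$. Combining Lemma~\ref{lem1} with the basic inequality $\alpha+\mu\le n$ applied to $G-v$, I first note that at least one of $\alpha,\mu$ must strictly drop when $v$ is deleted: if neither dropped, then $\alpha(G-v)+\mu(G-v)=n(G)>n(G-v)$, which is impossible. Hence $G-v$ is K\"onig-Egerv\'ary precisely when exactly one of $\alpha,\mu$ drops, i.e.\ when $v$ is \emph{not} simultaneously $\alpha$-critical and $\mu$-critical. Therefore $\varrho_v(G)=n(G)-|\{v:\ v\text{ is both }\alpha\text{- and }\mu\text{-critical}\}|$. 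Because $\mathrm{core}(G)$ is exactly the set of $\alpha$-critical vertices, these ``bad'' vertices are precisely the $\mu$-critical (essential) vertices lying in $\mathrm{core}(G)$.

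The crux is then to show $|\{v\in\mathrm{core}(G):\ v\text{ essential}\}|=\xi(G)-\varepsilon(G)$, which I will obtain from the sharper identity that the inessential vertices of $\mathrm{core}(G)$ are exactly $\ker(G)$. By Theorem~\ref{th715}(iv) every $S\in\Omega(G)$ is critical, so $\ker(G)=\bigcap\{\text{critical sets}\}\subseteq\bigcap\Omega(G)=\mathrm{core}(G)$, and the identity immediately gives $|\{\text{essential core vertices}\}|=\xi-\varepsilon$. For $\ker(G)\subseteq\{\text{inessential}\}$, Theorem~\ref{th444} makes $\ker(G)$ critical, so by Theorem~\ref{th333}(iii) there is a matching of $N(\ker(G))$ into $\ker(G)$; via Theorem~\ref{th715}(ii) this realizes $G[N[\ker(G)]]$ as a K\"onig-Egerv\'ary graph with independent block $\ker(G)$, and the $d(G)$ surplus vertices of $\ker(G)$ have all neighbours inside $N(\ker(G))$, hence remain unmatched under any extension to a maximum matching of $G$. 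The hard part is the reverse inclusion together with the claim that \emph{every} vertex of $\ker(G)$ (not only the surplus ones) is inessential: here I expect to need the Larson structure theorem (Theorem~\ref{th100}) to control how maximum matchings of $G$ meet $N[\ker(G)]$ and to rule out that an essential vertex of the smaller block forces saturation of a $\ker$-vertex. This identification of $\ker(G)$ with the inessential part of the core is the main obstacle; granting it, $\varrho_v(G)=n(G)-(\xi-\varepsilon)=n(G)-\xi(G)+\varepsilon(G)$.

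For the edge bound, $n(G-e)=n(G)$, so $G-e$ is K\"onig-Egerv\'ary iff $\alpha(G-e)+\mu(G-e)=\alpha(G)+\mu(G)$. By Corollary~\ref{cor25} this holds iff $e$ is both $\alpha$- and $\mu$-critical or neither, and by Proposition~\ref{prop1_1} an $\alpha$-critical edge of a K\"onig-Egerv\'ary graph is automatically $\mu$-critical. Thus $G-e$ fails to be K\"onig-Egerv\'ary exactly for the edges in $B=\{e:\ \mu\text{-critical and non-}\alpha\text{-critical}\}$, giving $\varrho_e(G)=m(G)-|B|$, and it remains to prove $|B|\ge\xi-\varepsilon$. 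I plan to inject the $\xi-\varepsilon$ essential core vertices into $B$: every edge incident to a core vertex is non-$\alpha$-critical, since a neighbour of a core vertex lies in no maximum independent set (it is adjacent to a vertex common to all of $\Omega(G)$), so deleting such an edge cannot enlarge $\alpha$; I then select for each essential core vertex an incident $\mu$-critical edge, and distinctness is automatic because $\mathrm{core}(G)$ is independent, so the chosen edges have pairwise distinct core-endpoints. The delicate point, and the reason only an inequality is asserted, is guaranteeing that each essential core vertex actually carries an incident $\mu$-critical edge (an essential vertex need not in general), which I expect to follow from the rigidity of the core together with Theorem~\ref{th715}(iii).
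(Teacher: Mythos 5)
The paper never proves this statement: Theorem \ref{th9} is quoted from \cite{LevMan2024}, so there is no in-paper argument to compare you against and your proposal must stand on its own. For the vertex equality, your opening reduction is correct and essentially forced: since $\alpha$ and $\mu$ each drop by at most one under vertex deletion and cannot both be preserved (else $\alpha(G-v)+\mu(G-v)=n(G)>n(G-v)$), $G-v$ remains K\"{o}nig-Egerv\'{a}ry exactly when $v$ is not simultaneously $\alpha$-critical and $\mu$-critical, so everything reduces to the identity $\{v\in\mathrm{core}(G):v\text{ is }\mu\text{-critical}\}=\mathrm{core}(G)\setminus\ker(G)$. But you prove neither inclusion. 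Even the direction you sketch ($\ker(G)$ consists of inessential vertices) is incomplete: a single matching from $N(\ker(G))$ into $\ker(G)$ leaves only $d(G)$ vertices of $\ker(G)$ exposed, so to show that an \emph{arbitrary} $v\in\ker(G)$ is missed by \emph{some} maximum matching you need, for each such $v$, a matching from $N(\ker(G))$ into $\ker(G)-\{v\}$; this can be extracted from a Hall-type argument using the fact that $\ker(G)$ is the inclusion-minimal critical set (a Hall violator would yield a strictly smaller critical set), but that argument is not in your text. The reverse inclusion, which you explicitly defer to Theorem \ref{th100}, is the real content of the equality and is simply absent.

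The edge part has a fatal flaw rather than a gap: the step you flag as ``delicate'' is false. Let $G$ be $K_{4}$ minus the edge $vs_{1}$, with the other two vertices $u_{1},u_{2}$. Then $G$ is K\"{o}nig-Egerv\'{a}ry with $\alpha(G)=\mu(G)=2$, $\mathrm{core}(G)=\{v,s_{1}\}$, and $v$ is an essential core vertex (as is $s_{1}$, since $G$ has a perfect matching); yet the only two maximum matchings, $\{vu_{1},s_{1}u_{2}\}$ and $\{vu_{2},s_{1}u_{1}\}$, share no edge, so $G$ has no $\mu$-critical edge whatsoever and your injection of essential core vertices into $B$ cannot be built. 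Worse, your (correct) identification $\varrho_{e}(G)=m(G)-\left\vert B\right\vert$ via Corollary \ref{cor25} and Proposition \ref{prop1_1} gives $\varrho_{e}(G)=5$ for this graph, while $d(G)=0$ forces $\emptyset$ to be a critical set, hence $\ker(G)=\emptyset$, $\varepsilon(G)=0$, and $m(G)-\xi(G)+\varepsilon(G)=3$ (note this reading of $\varepsilon$ is the one that makes the vertex formula come out right here: $\varrho_{v}(G)=2=4-2+0$). So no argument along your lines can succeed, and the discrepancy between the displayed edge inequality and this example needs to be resolved at the level of the statement, or of the definition of $\varepsilon$, before any proof is attempted.
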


In this paper, we characterize $1$-K\"{o}nig-Egerv\'{a}ry graphs, vertex
(edge) almost K\"{o}nig-Egerv\'{a}ry graphs, and present interrelationships
between them. We also show that if $G$ is a $1$-K\"{o}nig-Egerv\'{a}ry graph,
then
\[
\varrho_{v}\left(  G\right)  \leq n\left(  G\right)  +d\left(  G\right)
-\xi\left(  G\right)  -\beta(G).
\]
As an application,{\LARGE ~}we characterize the $1$-K\"{o}nig-Egerv\'{a}ry
graphs that become K\"{o}nig-Egerv\'{a}ry after deleting any vertex. In other
words, for a $1$-K\"{o}nig-Egerv\'{a}ry graph $G$, the equality $\varrho
_{v}\left(  G\right)  =n\left(  G\right)  $ holds if and only if $\xi\left(
G\right)  =0$, $\beta\left(  G\right)  =0$ and $\mu\left(  G\right)
=\frac{n\left(  G\right)  }{2}$.

\section{Structural results}

\begin{definition}
A set $A\subseteq V(G)$ is \textit{supportive} if either

\emph{(i)} \textit{there is a vertex }$v\in V\left(  G\right)  -A$ and a
matching from $V\left(  G\right)  -A-v$ into $A$, or

\emph{(ii)}\textrm{ }there is an edge $xy$ $\in E\left(  G-A\right)  $ and a
matching from $V\left(  G\right)  -A-x-y$ into $A$.
\end{definition}

For instance, consider the graphs in Figure \ref{fig9}: the set $A=\left\{
a_{1},a_{2}\right\}  $ is a supportive (maximum independent) set in $G_{1}$,
and $B=\left\{  b_{1},b_{2}\right\}  $ is a supportive set in $G_{2}%
$.\begin{figure}[h]
\setlength{\unitlength}{1cm}\begin{picture}(5,1.2)\thicklines
\multiput(3,0)(1,0){3}{\circle*{0.29}}
\multiput(3,1)(2,0){2}{\circle*{0.29}}
\put(3,0){\line(1,0){2}}
\put(3,1){\line(1,0){2}}
\put(3,0){\line(0,1){1}}
\put(3,1){\line(1,-1){1}}
\put(4,0){\line(1,1){1}}
\put(5,0){\line(0,1){1}}
\put(4,0.35){\makebox(0,0){$v$}}
\put(2.65,0){\makebox(0,0){$a_{1}$}}
\put(5.35,0){\makebox(0,0){$a_{2}$}}
\put(2,0.5){\makebox(0,0){$G_{1}$}}
\multiput(8,0)(1,0){3}{\circle*{0.29}}
\multiput(8,1)(1,0){3}{\circle*{0.29}}
\put(8,0){\line(1,0){2}}
\put(8,1){\line(1,0){2}}
\put(8,0){\line(0,1){1}}
\put(8,1){\line(1,-1){1}}
\put(8,0){\line(1,1){1}}
\put(9,0){\line(1,1){1}}
\put(9,0){\line(0,1){1}}
\put(10,0){\line(0,1){1}}
\put(7.65,0){\makebox(0,0){$x$}}
\put(7.65,1){\makebox(0,0){$y$}}
\put(10.35,0){\makebox(0,0){$b_{1}$}}
\put(10.35,1){\makebox(0,0){$b_{2}$}}
\put(7,0.5){\makebox(0,0){$G_{2}$}}
\end{picture}\caption{Supportive sets: $\left\{  a_{1},a_{2}\right\}  $ and
$\left\{  b_{1},b_{2}\right\}  $.}%
\label{fig9}%
\end{figure}

The following finding gives a structural characterization of $1$%
-K\"{o}nig-Egerv\'{a}ry graphs, similarly to some for K\"{o}nig-Egerv\'{a}ry
graphs \cite{LevMan2012b,LevMan2013b}.

\begin{theorem}
\label{Th1}Let $G$ be a non-K\"{o}nig-Egerv\'{a}ry graph. Then the following
assertions are equivalent:

\emph{(i)}\textrm{ }$G$ is a $1$-K\"{o}nig-Egerv\'{a}ry graph\textit{;}

\emph{(ii)} \textit{there exists a supportive maximum independent set in }%
$G$\textit{;}

\emph{(iii)}\textrm{ }\textit{every maximum independent set of }$G$ is
supportive\textit{.}
\end{theorem}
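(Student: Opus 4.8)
The plan is to prove the equivalence by establishing the implication chain $(iii) \Rightarrow (ii) \Rightarrow (i) \Rightarrow (iii)$, using the fundamental inequality $\alpha(G) + \mu(G) \le n(G)$ together with the definition of a supportive set. Let me first fix notation: write $\alpha = \alpha(G)$, $\mu = \mu(G)$, $n = n(G)$, so $G$ being $1$-K\"{o}nig-Egerv\'{a}ry means $\alpha + \mu = n - 1$.

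\begin{proof}
The implication $(iii) \Rightarrow (ii)$ is immediate, since $\Omega(G) \neq \emptyset$.

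For $(ii) \Rightarrow (i)$, suppose $S \in \Omega(G)$ is supportive, so $|S| = \alpha$ and $|V(G) - S| = n - \alpha$. In case (i) of supportiveness, there is a vertex $v \notin S$ and a matching from $V(G) - S - v$ into $S$; this matching has size $n - \alpha - 1$, and since its edges join $V(G)-S-v$ to $S$, it is a matching in $G$, giving $\mu \ge n - \alpha - 1$, i.e. $\alpha + \mu \ge n - 1$. The case (ii) of supportiveness is analogous: the matching from $V(G) - S - x - y$ into $S$ has size $n - \alpha - 2$, but now the edge $xy$ itself can be adjoined (since $x, y \notin S$ and neither is saturated by the matching), yielding a matching of size $n - \alpha - 1$, so again $\alpha + \mu \ge n - 1$. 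Combined with the universal bound $\alpha + \mu \le n$ and the hypothesis that $G$ is not K\"{o}nig-Egerv\'{a}ry (which rules out $\alpha + \mu = n$), we conclude $\alpha + \mu = n - 1$.

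The substantive implication is $(i) \Rightarrow (iii)$. Assume $\alpha + \mu = n - 1$ and let $S \in \Omega(G)$ be arbitrary; I must show $S$ is supportive. Fix a maximum matching $M$, so $|M| = \mu = n - \alpha - 1$, and $M$ saturates $2\mu = 2(n - \alpha - 1)$ vertices, leaving exactly $n - 2\mu = 2\alpha + 2 - n$ vertices $M$-unsaturated. Consider the edges of $M$ relative to the partition $(S, A)$ with $A = V(G) - S$: since $S$ is independent, every edge of $M$ has at least one endpoint in $A$. Classify $M$-edges as \emph{crossing} (one endpoint in $S$, one in $A$) or \emph{internal} (both endpoints in $A$). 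A counting argument on how many vertices of $A$ are covered, using $|A| = n - \alpha$ and the fact that $M$ is maximum, should pin down the number of internal edges and unsaturated vertices; the expectation is that there is exactly one ``defect'' giving rise to either an unsaturated vertex outside $S$ or a single internal edge, which will supply the vertex $v$ or the edge $xy$ required by supportiveness, while the crossing edges provide the matching from the remaining vertices of $A$ into $S$.

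The main obstacle will be the case analysis in $(i) \Rightarrow (iii)$: showing that the unique unit of deficiency forced by $\alpha + \mu = n - 1$ can always be localized either as a single $M$-unsaturated vertex lying in $A$ (yielding case (i) of supportiveness) or as a single internal $M$-edge within $A$ (yielding case (ii)), and never in a configuration incompatible with supportiveness. Here I would lean on Theorem~\ref{th715}, or rather its failure, together with an augmenting-path or alternating-path argument: if $S$ failed to be supportive, one could augment $M$ to exceed $\mu$ or enlarge $S$ beyond $\alpha$, contradicting maximality. The delicate point is ensuring that the endpoints of the chosen internal edge (in case (ii)) are genuinely both unmatched to $S$ by the crossing part of $M$, so that deleting them leaves a clean matching from $V(G) - S - x - y$ into $S$; this requires choosing $M$ and the alternating structure carefully so that exactly the right two vertices of $A$ are isolated from the crossing matching.
\end{proof}
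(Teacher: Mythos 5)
Your implications \emph{(iii)}$\Rightarrow$\emph{(ii)} and \emph{(ii)}$\Rightarrow$\emph{(i)} are complete and essentially identical to the paper's. The genuine gap is in \emph{(i)}$\Rightarrow$\emph{(iii)}: you set up exactly the right framework (classifying the edges of a maximum matching $M$ as crossing or internal with respect to the partition $(S,A)$, $A=V(G)-S$), but you never execute the count, writing only that it ``should pin down'' the configuration and then deferring the case analysis to an unspecified augmenting-path argument. That count \emph{is} the proof, and it closes in two lines, which is precisely what the paper does: let $b_{1}$ be the number of crossing $M$-edges and $b_{2}$ the number of internal ones; since $S$ is independent every $M$-edge meets $A$, so $\mu(G)=b_{1}+b_{2}$, and the $M$-saturated vertices of $A$ number $b_{1}+2b_{2}\leq\left\vert A\right\vert =n(G)-\alpha(G)=\mu(G)+1$, whence $b_{2}\leq1$. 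If $b_{2}=0$, the $b_{1}=\mu(G)=\left\vert A\right\vert -1$ crossing edges saturate all of $A$ except a single vertex $v$ and form a matching of $A-v$ into $S$ (case \emph{(i)} of supportiveness); if $b_{2}=1$, the $b_{1}=\left\vert A\right\vert -2$ crossing edges together with the unique internal edge $xy$ saturate all of $A$, and the crossing edges match $A-\left\{ x,y\right\}$ into $S$ (case \emph{(ii)}).

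The ``delicate point'' you flag --- ensuring that the endpoints of the internal edge are not also met by crossing edges --- is vacuous: $M$ is a matching, so its internal and crossing edges are automatically vertex-disjoint, and no careful choice of $M$ or of any alternating structure is needed. Likewise, no augmenting-path or independence-augmentation argument enters this direction at all; maximality of $M$ is used only through the single inequality $b_{1}+2b_{2}\leq\left\vert A\right\vert$ combined with $\mu(G)=n(G)-\alpha(G)-1$. As written, your proposal is a correct plan whose central implication remains unproven; filling it in as above yields exactly the paper's argument.
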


\begin{proof}
\emph{(i)}\textrm{ }$\Rightarrow$ \emph{(iii)}\textrm{ }Assume that $G$ is
$1$-K\"{o}nig-Egerv\'{a}ry, $S$ is an arbitrary maximum independent set, and
$M$ is a maximum matching\textit{. }Hence,\textit{ }%
\[
\mu\left(  G\right)  =n\left(  G\right)  -\alpha\left(  G\right)  -1=n\left(
G\right)  -\left\vert S\right\vert -1.
\]

Let $M$ contain $b_{1}$ edges connecting $S$ and $V\left(  G\right)  -S$,
while $b_{2}$ edges connecting vertices from $V\left(  G\right)  -S$. Thus
$\mu\left(  G\right)  =b_{1}+b_{2}$. Hence,
\[
n\left(  G\right)  -\alpha\left(  G\right)  =n\left(  G\right)  -\left\vert
S\right\vert \geq b_{1}+2b_{2}=\mu\left(  G\right)  +b_{2}.
\]
Therefore, we get
\[
1=n\left(  G\right)  -\alpha\left(  G\right)  -\mu\left(  G\right)  \geq
b_{2}.
\]

\textit{Case 1.} $b_{2}=0$. Since $\left\vert M\right\vert =$\ $\mu\left(
G\right)  =$\ \ $n\left(  G\right)  -\alpha\left(  G\right)  -1=$\ $\left\vert
V\left(  G\right)  -S\right\vert -1$, we infer that $M$ saturates all the
vertices from $V\left(  G\right)  -S$, except one, say $v\in V\left(
G\right)  -S$, and then $G-v$ is a K\"{o}nig-Egerv\'{a}ry graph.

\textit{Case 2.} $b_{2}=1$. Since $\left\vert M\right\vert -1=$\ $\mu\left(
G\right)  -1=$\ \ $n\left(  G\right)  -\alpha\left(  G\right)  -2=$%
\ $\left\vert V\left(  G\right)  -S\right\vert -2$, we conclude that $M$
saturates all the vertices from $V\left(  G\right)  -S$, and $M$ contains
exactly one edge that joins two vertices from $V\left(  G\right)  -S$.

Thus $S$ is supportive.

\emph{(iii)}\textrm{ }$\Rightarrow$ \emph{(ii)}\textrm{ }Clear.

\emph{(ii)}\textrm{ }$\Rightarrow$ \emph{(i)}\textrm{ }Let $S$ be a supportive
maximum independent set. Now, by the definition of a supportive set and the
fact that $S\in\Omega\left(  G\right)  $, we get $\alpha(G)+\mu(G)=n\left(
G\right)  -1$, as required.
\end{proof}

\begin{corollary}
\label{cor1}A non-K\"{o}nig-Egerv\'{a}ry\textit{ graph }$G$ is $1$%
-K\"{o}nig-Egerv\'{a}ry\textit{ if and only if either there is a vertex }$v\in
V\left(  G\right)  $ such that $G-v$ is a K\"{o}nig-Egerv\'{a}ry\textit{ graph
or there }is an edge $xy\in E\left(  G\right)  $ such that $G-x-y$ is a
K\"{o}nig-Egerv\'{a}ry graph\textit{.}
\end{corollary}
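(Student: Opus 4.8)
The plan is to derive this directly from Theorem \ref{Th1}, using Theorem \ref{th715}(ii) to convert "supportive" into "König-Egerváry after deletion" for the forward implication, and a short counting argument for the converse.

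For the forward implication, I would assume $G$ is $1$-König-Egerváry and invoke Theorem \ref{Th1} to obtain a supportive maximum independent set $S$. If $S$ is supportive through part (i) of the definition, there is a vertex $v\in V(G)-S$ and a matching from $V(G)-S-v$ into $S$. Since $S\in\mathrm{Ind}(G-v)$, since $V(G-v)-S=V(G)-S-v$, and since this matching saturates every vertex of $V(G-v)-S$ (forcing $|S|\geq|V(G-v)-S|$), Theorem \ref{th715}(ii) yields that $G-v$ is König-Egerváry. If instead $S$ is supportive through part (ii), there is an edge $xy\in E(G[V(G)-S])$ and a matching from $V(G)-S-x-y$ into $S$; because $x,y\notin S$ we have $S\in\mathrm{Ind}(G-x-y)$, and the same reasoning applied to $G-x-y$ shows it is König-Egerváry.

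For the converse, I would argue by a direct count, relying on $\alpha(G)+\mu(G)\leq n(G)-1$ (which holds because $G$ is not König-Egerváry). Suppose first $G-v$ is König-Egerváry, so $\alpha(G-v)+\mu(G-v)=n(G)-1$. By Lemma \ref{lem1}(ii),(iii) we have $\alpha(G-v)\leq\alpha(G)$ and $\mu(G-v)\leq\mu(G)$, whence $n(G)-1\leq\alpha(G)+\mu(G)$; combined with the upper bound this forces $\alpha(G)+\mu(G)=n(G)-1$. Suppose instead $G-x-y$ is König-Egerváry for an edge $xy\in E(G)$, so $\alpha(G-x-y)+\mu(G-x-y)=n(G)-2$. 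Here I would use $\alpha(G-x-y)\leq\alpha(G)$ (Lemma \ref{lem1}(ii) applied twice) together with the key observation that appending the edge $xy$ to any maximum matching of $G-x-y$ produces a matching of $G$, so $\mu(G)\geq\mu(G-x-y)+1$. Adding these gives $\alpha(G)+\mu(G)\geq(n(G)-2)+1=n(G)-1$, and equality again follows.

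The main obstacle is the edge case of the converse: the crude estimate $\alpha(G-x-y)+\mu(G-x-y)\leq\alpha(G)+\mu(G)$ only yields $\alpha(G)+\mu(G)\geq n(G)-2$, which is too weak to rule out $G$ having König deficiency $2$. The point that rescues the argument is that deleting both endpoints of an edge removes a matchable pair, so the $+1$ gained by restoring $xy$ to a maximum matching of $G-x-y$ exactly compensates for the loss of two vertices, sharpening the lower bound to $n(G)-1$.
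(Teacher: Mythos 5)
Your proposal is correct and follows essentially the same route as the paper: the forward implication is read off from Theorem \ref{Th1} (you merely make explicit, via Theorem \ref{th715}\emph{(ii)}, why a supportive $S$ forces $G-v$ or $G-x-y$ to be K\"{o}nig-Egerv\'{a}ry), and your converse is the paper's counting argument verbatim, including the key observation that $\mu(G)\geq\mu(G-x-y)+1$ because the edge $xy$ can be appended to a maximum matching of $G-x-y$.
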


\begin{proof}
Suppose that $G$ is $1$-K\"{o}nig-Egerv\'{a}ry\textit{. }By Theorem \ref{Th1},
either there exists a vertex $v$ such that $G-v$ is K\"{o}nig-Egerv\'{a}ry or
there is an edge $xy$ such that $G-x-y$ is K\"{o}nig-Egerv\'{a}ry.

Conversely, assume that $G-v$ is a K\"{o}nig-Egerv\'{a}ry graph, for some
$v\in V\left(  G\right)  $. Since $G$ is not a K\"{o}nig-Egerv\'{a}ry graph,
we get that
\[
n\left(  G\right)  -1=\alpha(G-v)+\mu(G-v)\leq\alpha(G)+\mu(G)\leq n\left(
G\right)  -1\text{,}%
\]
which means that $\alpha(G)+\mu(G)=n\left(  G\right)  -1$, i.e., $G$ is $1$-K\"{o}nig-Egerv\'{a}ry.

Now, let $xy\in E(G)$ be such that $G-x-y$ is a K\"{o}nig-Egerv\'{a}ry graph.
Hence,
\[
n\left(  G\right)  -2=\alpha(G-x-y)+\mu(G-x-y)\leq\alpha(G)+\mu(G)\leq
n\left(  G\right)  -1\text{.}%
\]
It is clear that $\alpha(G)-\alpha(G-x-y)\geq0$. On the other hand,
$\mu(G)-\mu(G-x-y)>0$, because $xy\in E(G)$. Thus $\alpha(G)+\mu(G)=n\left(
G\right)  -1$, as required.
\end{proof}

For instance, the graphs from Figure \ref{fig111} are $1$%
-K\"{o}nig-Egerv\'{a}ry, as both $G_{1}-x-y$ and $G_{2}-v$ are
K\"{o}nig-Egerv\'{a}ry\textit{ }graphs\textit{.}

\begin{figure}[h]
\setlength{\unitlength}{1cm}\begin{picture}(5,2)\thicklines
\multiput(3,1)(1,0){4}{\circle*{0.29}}
\multiput(4,2)(1,0){2}{\circle*{0.29}}
\put(3,1){\line(1,0){3}}
\put(3,1){\line(1,1){1}}
\put(3,1){\line(2,1){2}}
\put(4,1){\line(0,1){1}}
\put(4,1){\line(1,1){1}}
\put(4,2){\line(1,-1){1}}
\put(4,2){\line(2,-1){2}}
\put(5,1){\line(0,1){1}}
\put(5,2){\line(1,-1){1}}
\qbezier(3,1)(4.5,-0.5)(6,1)
\qbezier(3,1)(4,0.3)(5,1)
\qbezier(4,1)(5,0.3)(6,1)
\put(3,1.35){\makebox(0,0){$x$}}
\put(6,1.35){\makebox(0,0){$y$}}
\put(2,1){\makebox(0,0){$G_{1}$}}
\multiput(9,1)(1,0){4}{\circle*{0.29}}
\multiput(9,2)(1,0){3}{\circle*{0.29}}
\put(9,1){\line(1,0){3}}
\put(9,1){\line(0,1){1}}
\put(9,1){\line(1,1){1}}
\put(9,1){\line(2,1){2}}
\put(9,2){\line(1,-1){1}}
\put(9,2){\line(2,-1){2}}
\put(9,2){\line(3,-1){3}}
\put(10,1){\line(0,1){1}}
\put(10,1){\line(1,1){1}}
\put(10,2){\line(1,-1){1}}
\put(10,2){\line(2,-1){2}}
\put(11,1){\line(0,1){1}}
\put(11,2){\line(1,-1){1}}
\qbezier(9,1)(10.5,-0.5)(12,1)
\qbezier(9,1)(10,0.3)(11,1)
\qbezier(10,1)(11,0.3)(12,1)
\put(12,1.35){\makebox(0,0){$v$}}
\put(8,1){\makebox(0,0){$G_{2}$}}
\end{picture}\caption{Both $G_{1}$ and $G_{2}$ are $1$-K\"{o}nig-Egerv\'{a}ry
graphs.}%
\label{fig111}%
\end{figure}
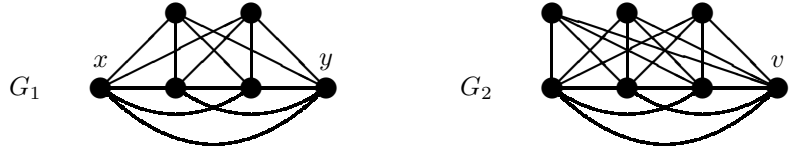

Clearly, Theorem \ref{Th1} may be reformulated as follows.

\begin{theorem}
\label{th911}Let $G$ be a non-K\"{o}nig-Egerv\'{a}ry graph. Then $G$ is a
$1$-K\"{o}nig-Egerv\'{a}ry graph\textit{ if and only if }\textrm{for
}\textit{every }$S\in\Omega\left(  G\right)  $\textit{, there is a vertex
}$v\in V\left(  G\right)  -S$ and a matching from $V\left(  G\right)
-S-\left\{  v\right\}  $ into $S$, or\textit{\ }there is an edge $xy$ $\in
E\left(  G-S\right)  $ and a matching from $V\left(  G\right)  -S-\left\{
x,y\right\}  $ into $S$.
\end{theorem}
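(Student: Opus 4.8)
The plan is to obtain this statement directly from Theorem \ref{Th1}, which is already at our disposal, rather than redo any matching argument from scratch. Theorem \ref{Th1} asserts the equivalence, for a non-K\"{o}nig-Egerv\'{a}ry graph $G$, of the three conditions: \emph{(i)} $G$ is $1$-K\"{o}nig-Egerv\'{a}ry, \emph{(ii)} some $S\in\Omega(G)$ is supportive, and \emph{(iii)} every $S\in\Omega(G)$ is supportive. The present theorem is nothing more than the equivalence \emph{(i)} $\Leftrightarrow$ \emph{(iii)} with the phrase ``every maximum independent set is supportive'' replaced by the explicit content of the definition of a supportive set. So my first and only real step is to invoke \emph{(i)} $\Leftrightarrow$ \emph{(iii)} and then expand that definition.

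To carry out the expansion I would fix an arbitrary $S\in\Omega(G)$ and apply the definition of a supportive set with $A=S$. By that definition, $S$ is supportive precisely when either there is a vertex $v\in V(G)-S$ together with a matching from $V(G)-S-v$ into $S$, or there is an edge $xy\in E(G-S)$ together with a matching from $V(G)-S-x-y$ into $S$. Writing $V(G)-S-v=V(G)-S-\{v\}$ and $V(G)-S-x-y=V(G)-S-\{x,y\}$ (mere notational identifications), this is verbatim the right-hand side of the claimed equivalence, now quantified over all $S\in\Omega(G)$ exactly as condition \emph{(iii)} requires. Hence the stated biconditional follows.

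There is essentially no obstacle here: the only things to verify are that the quantifier ``every maximum independent set of $G$'' appearing in \emph{(iii)} of Theorem \ref{Th1} coincides with ``every $S\in\Omega(G)$'' in the present statement, which holds since $\Omega(G)$ is by definition the family of all maximum independent sets, and that the set-difference notations agree. All the substantive work, namely the counting of matching edges across and within $V(G)-S$ that forces the deficiency $\kappa(G)=1$, has already been carried out in the proof of Theorem \ref{Th1}, so no fresh case analysis is needed and the result is immediate.
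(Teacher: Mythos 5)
Your proposal is correct and matches the paper exactly: the paper offers no separate proof of Theorem \ref{th911}, introducing it with the remark that Theorem \ref{Th1} ``may be reformulated as follows,'' which is precisely your step of invoking the equivalence \emph{(i)} $\Leftrightarrow$ \emph{(iii)} of Theorem \ref{Th1} and unfolding the definition of a supportive set with $A=S$.
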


It is known that $\mu(G)\leq\alpha(G)$ is true for every
K\"{o}nig-Egerv\'{a}ry graph (Theorem \ref{th715}\emph{(iii)})\emph{.
}Consider the graphs in Figure \ref{fig222} in order to see that for
$1$-K\"{o}nig-Egerv\'{a}ry graphs the situation is different.

\begin{figure}[h]
\setlength{\unitlength}{1cm}\begin{picture}(5,2.2)\thicklines
\multiput(2,0)(1,0){3}{\circle*{0.29}}
\multiput(2,1)(1,0){3}{\circle*{0.29}}
\multiput(3,2)(1,0){2}{\circle*{0.29}}
\put(2,0){\line(1,0){2}}
\put(2,0){\line(0,1){1}}
\put(2,0){\line(1,1){1}}
\put(2,0){\line(1,2){1}}
\put(2,1){\line(1,1){1}}
\put(2,1){\line(1,-1){1}}
\put(2,1){\line(1,0){1}}
\put(3,0){\line(1,1){1}}
\put(3,0){\line(1,2){1}}
\put(3,1){\line(1,1){1}}
\put(3,1){\line(1,-1){1}}
\put(3,1){\line(1,0){1}}
\put(3,2){\line(1,0){1}}
\put(3,2){\line(1,-1){1}}
\put(3,2){\line(1,-2){1}}
\put(4,0){\line(0,1){2}}
\put(1,1){\makebox(0,0){$G_{1}$}}
\multiput(6,0)(1,0){3}{\circle*{0.29}}
\multiput(7,1)(1,0){2}{\circle*{0.29}}
\put(6,0){\line(1,0){2}}
\put(6,0){\line(1,1){1}}
\put(7,0){\line(0,1){1}}
\put(8,0){\line(0,1){1}}
\put(5.2,0.5){\makebox(0,0){$G_{2}$}}
\multiput(10,0)(1,0){4}{\circle*{0.29}}
\multiput(11,1)(1,0){2}{\circle*{0.29}}
\put(10,0){\line(1,0){3}}
\put(10,0){\line(1,1){1}}
\put(11,0){\line(0,1){1}}
\put(12,0){\line(0,1){1}}
\put(9.2,0.5){\makebox(0,0){$G_{3}$}}
\end{picture}\caption{$\mu(G_{1})=\alpha(G_{1})+1$, $\mu(G_{2})=\alpha(G_{2}%
)$, and $\mu(G_{3})\,<\alpha(G_{3})$.}%
\label{fig222}%
\end{figure}
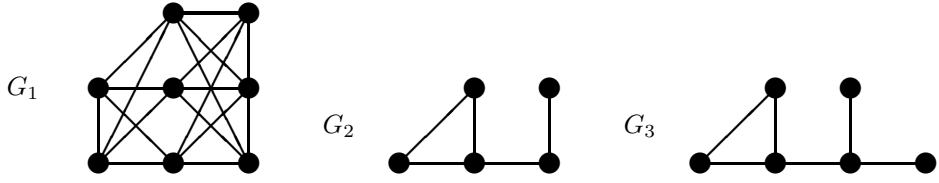

\begin{theorem}
\label{th12}If $G$ is a $1$-K\"{o}nig-Egerv\'{a}ry graph, then

\emph{(i)} $\mu(G)\leq\alpha(G)+1$;

\emph{(ii)} $\mu(G)=\alpha(G)+1$ if and only if $G$ has a perfect matching;

\emph{(iii)} $\mu(G)<\alpha(G)$, whenever $G$ has no perfect matchings and
$n\left(  G\right)  $ is even.
\end{theorem}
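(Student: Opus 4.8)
The plan is to deduce all three parts directly from the defining identity $\alpha(G)+\mu(G)=n\left(  G\right)  -1$ of a $1$-K\"{o}nig-Egerv\'{a}ry graph, combined with the single elementary fact that every matching saturates at most $n\left(  G\right)  $ vertices, i.e. $2\mu(G)\leq n\left(  G\right)  $. No structural machinery from the earlier results is needed here; the whole argument is a short bookkeeping exercise around these two relations, so the real task is organizing the cases cleanly rather than surmounting a genuine obstacle.

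For \emph{(i)}, I would rewrite the defining identity as $\alpha(G)+1=n\left(  G\right)  -\mu(G)$. Then the desired inequality $\mu(G)\leq\alpha(G)+1$ is equivalent to $\mu(G)\leq n\left(  G\right)  -\mu(G)$, that is, to $2\mu(G)\leq n\left(  G\right)  $, which always holds. Hence \emph{(i)} is immediate.

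For \emph{(ii)}, I would observe that, by the computation used in \emph{(i)}, the equality $\mu(G)=\alpha(G)+1$ is equivalent to $2\mu(G)=n\left(  G\right)  $. The latter says precisely that a maximum matching saturates all $n\left(  G\right)  $ vertices, i.e. that $G$ has a perfect matching. Conversely, a perfect matching yields $\mu(G)=n\left(  G\right)  /2$, whence $\alpha(G)=n\left(  G\right)  -1-\mu(G)=\mu(G)-1$, so $\mu(G)=\alpha(G)+1$. This settles both directions.

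For \emph{(iii)}, assume that $G$ has no perfect matching and that $n\left(  G\right)  $ is even. The absence of a perfect matching gives $2\mu(G)<n\left(  G\right)  $, hence $2\mu(G)\leq n\left(  G\right)  -1$; since $2\mu(G)$ is even and $n\left(  G\right)  $ is even, parity forces $2\mu(G)\leq n\left(  G\right)  -2$, and therefore $2\mu(G)<n\left(  G\right)  -1$. Rewriting via $\alpha(G)=n\left(  G\right)  -1-\mu(G)$, this is exactly $\mu(G)<\alpha(G)$, as claimed. The only point demanding a little care is this parity step, which is what upgrades the unhelpful strict bound $2\mu(G)<n\left(  G\right)  $ to $2\mu(G)\leq n\left(  G\right)  -2$ and thereby produces the strict inequality $\mu(G)<\alpha(G)$; without the evenness hypothesis one could only conclude $\mu(G)\leq\alpha(G)$.
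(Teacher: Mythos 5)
Your proof is correct, and for part \emph{(i)} it takes a genuinely different and more elementary route than the paper. The paper derives \emph{(i)} from its structural characterization of $1$-K\"{o}nig-Egerv\'{a}ry graphs (Theorem \ref{th911}), splitting into two cases according to whether the supportive maximum independent set leaves one unsaturated vertex or one internal matching edge, and then invoking $\mu\leq\alpha$ for the resulting K\"{o}nig-Egerv\'{a}ry subgraph; you instead observe that $\mu(G)\leq\alpha(G)+1$ is, via $\alpha(G)+\mu(G)=n(G)-1$, equivalent to the universally true $2\mu(G)\leq n(G)$. Your argument is shorter and needs no structural machinery; the paper's case analysis has the side benefit of showing that $\mu(G)\leq\alpha(G)$ already holds in the ``vertex'' case, which foreshadows \emph{(iii)}, but this extra information is not needed for the statement as written. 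Parts \emph{(ii)} and \emph{(iii)} are essentially identical to the paper's: \emph{(ii)} is the same two-line computation, and in \emph{(iii)} your parity step ($2\mu(G)\leq n(G)-2$ since both $2\mu(G)$ and $n(G)$ are even) is the same integrality argument the paper phrases as ``$\alpha(G)>\frac{n(G)}{2}-1$ and $\frac{n(G)}{2}$ is an integer, hence $\alpha(G)\geq\frac{n(G)}{2}$.'' Everything checks out.
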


\begin{proof}
\emph{(i) }According to Theorem \ref{th911}, we distinguish between the
following cases.

\textit{Case 1}. There are a vertex\textit{ }$v\in V\left(  G\right)  -S$ and
a matching from $V\left(  G\right)  -S-\left\{  v\right\}  $ into $S$, where
$S\in\Omega\left(  G\right)  $.\textit{ }

By Theorem \ref{th715}, it follows that $\mu\left(  G-v\right)  =\mu\left(
G\right)  $ and $\alpha\left(  G-v\right)  =\alpha\left(  G\right)  $. Since
$G-v$ is a K\"{o}nig-Egerv\'{a}ry graph, we know that $\mu\left(  G-v\right)
\leq\alpha\left(  G-v\right)  $. Thus, $\mu(G)\leq\alpha(G)\leq\alpha(G)+1$.

\textit{Case 2}. There is an edge $xy$ $\in E\left(  G-S\right)  $ and a
matching from $V\left(  G\right)  -S-\left\{  x,y\right\}  $ into $S$, where
$S\in\Omega\left(  G\right)  $.\textit{ }

By Theorem \ref{th715}, it follows that $\mu\left(  G-\left\{  x,y\right\}
\right)  +1=\mu\left(  G\right)  $ and $\alpha\left(  G-\left\{  x,y\right\}
\right)  =\alpha\left(  G\right)  $. Since $G-\left\{  x,y\right\}  $ is a
K\"{o}nig-Egerv\'{a}ry graph, we obtain
\[
\mu\left(  G-\left\{  x,y\right\}  \right)  \leq\alpha\left(  G-\left\{
x,y\right\}  \right)  \Leftrightarrow\mu\left(  G\right)  -1\leq\alpha\left(
G\right)  ,
\]
as required.

\emph{(ii)} If $\mu(G)=\alpha(G)+1$, then $n\left(  G\right)  =\mu
(G)+\alpha(G)+1=2\mu(G)$, which means that $G$ has a perfect matching.

Conversely, if $G$ has a perfect matching, then $2\mu(G)=n\left(  G\right)
=\mu(G)+\alpha(G)+1$, and this gives $\mu(G)=\alpha(G)+1$.

\emph{(iii)} If $G$ has no perfect matchings, then $\mu(G)<\frac{n\left(
G\right)  }{2}$. Hence,%
\[
\frac{n\left(  G\right)  }{2}+\alpha(G)>\mu(G)+\alpha(G)=n\left(  G\right)
-1,
\]
which means that $\alpha(G)>\frac{n\left(  G\right)  }{2}-1$. Since
$\frac{n\left(  G\right)  }{2}$ is integer, we obtain $\alpha(G)\geq
\frac{n\left(  G\right)  }{2}$. Finally, we get $\alpha(G)\geq\frac{n\left(
G\right)  }{2}>\mu(G)$, which completes the proof.
\end{proof}

\begin{theorem}
\label{th3}If $G$ is either an edge almost K\"{o}nig-Egerv\'{a}ry graph or a
vertex almost K\"{o}nig-Egerv\'{a}ry graph, then $G$ is a $1$%
-K\"{o}nig-Egerv\'{a}ry graph as well.
\end{theorem}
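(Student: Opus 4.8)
The plan is to prove both implications through a single two-sided inequality on $\alpha(G)+\mu(G)$. Since $G$ is not K\"{o}nig-Egerv\'{a}ry, the universal bound $\alpha(G)+\mu(G)\le n(G)$ must be strict, so $\alpha(G)+\mu(G)\le n(G)-1$. It therefore suffices to establish the reverse inequality $\alpha(G)+\mu(G)\ge n(G)-1$ in each of the two cases, and I would obtain this directly from the monotonicity estimates in Lemma \ref{lem1}, never needing to exhibit any explicit matching or independent set.

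First I would treat the vertex almost K\"{o}nig-Egerv\'{a}ry case. Here there is a vertex $v$ with $G-v$ K\"{o}nig-Egerv\'{a}ry, so $\alpha(G-v)+\mu(G-v)=n(G-v)=n(G)-1$. By Lemma \ref{lem1}(ii) and (iii) we have $\alpha(G-v)\le\alpha(G)$ and $\mu(G-v)\le\mu(G)$, and adding these yields $n(G)-1\le\alpha(G)+\mu(G)$, which is exactly the reverse inequality needed. Next I would treat the edge almost K\"{o}nig-Egerv\'{a}ry case. Here there is an edge $e$ with $G-e$ K\"{o}nig-Egerv\'{a}ry, so $\alpha(G-e)+\mu(G-e)=n(G-e)=n(G)$. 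This time I would invoke Lemma \ref{lem1}(i) and (iii), giving $\alpha(G-e)\le\alpha(G)+1$ and $\mu(G-e)\le\mu(G)$; adding these yields $n(G)\le\alpha(G)+\mu(G)+1$, i.e. again $n(G)-1\le\alpha(G)+\mu(G)$. In both cases the two opposing inequalities force $\alpha(G)+\mu(G)=n(G)-1$, so $G$ is $1$-K\"{o}nig-Egerv\'{a}ry.

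The argument is short and I do not expect a genuine obstacle; the only point requiring attention is the asymmetry between the two cases. Deleting a vertex lowers $n$ by one and can only decrease the independence number, so in the vertex case the term $n(G-v)=n(G)-1$ already supplies the deficiency. In the edge case, by contrast, $n(G-e)=n(G)$, and the slack must instead come from the fact that deleting an edge may raise $\alpha$ by as much as one (Lemma \ref{lem1}(i)); it is precisely this ``$+1$'' that converts the K\"{o}nig-Egerv\'{a}ry equality for $G-e$ into the $1$-K\"{o}nig-Egerv\'{a}ry equality for $G$. I would also remark that Lemma \ref{Larson lemma} offers an alternative route to the vertex case (it gives $\alpha(G-v)=\alpha(G)$ and $\mu(G-v)=\mu(G)$ outright), but this stronger input is unnecessary, since the one-sided bounds of Lemma \ref{lem1} already close the gap.
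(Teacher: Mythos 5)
Your proposal is correct and follows essentially the same route as the paper: both arguments combine the strict inequality $\alpha(G)+\mu(G)<n(G)$ (from $G$ not being K\"{o}nig-Egerv\'{a}ry) with the monotonicity bounds of Lemma \ref{lem1} applied to the K\"{o}nig-Egerv\'{a}ry equality for $G-v$ or $G-e$, the only difference being purely presentational (you split the two-sided estimate into separate inequalities, while the paper writes it as a single chain). No gaps.
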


\begin{proof}
Let $e\in E(G)$ be such that $G-e$ is a K\"{o}nig-Egerv\'{a}ry graph. Since
$G$ is not a K\"{o}nig-Egerv\'{a}ry graph, and clearly, $\alpha(G)\leq
\alpha(G-e)\leq\alpha(G)+1$ and $\mu(G)\geq\mu(G-e)$, we obtain%
\[
n\left(  G\right)  -1=n\left(  G-e\right)  -1=\alpha(G-e)-1+\mu(G-e)\leq
\alpha(G)+\mu(G)<n\left(  G\right)  \text{.}%
\]
Therefore, $\alpha(G)+\mu(G)=n\left(  G\right)  -1$, i.e., $G$ is a
$1$-K\"{o}nig-Egerv\'{a}ry graph.

Let $v\in V\left(  G\right)  $ be such that $G-v$ is a K\"{o}nig-Egerv\'{a}ry
graph. Since $G$ is not a K\"{o}nig-Egerv\'{a}ry graph, we deduce that
\[
n\left(  G\right)  -1=n\left(  G-v\right)  =\alpha(G-v)+\mu(G-v)\leq
\alpha(G)+\mu(G)<n\left(  G\right)  \text{,}%
\]
which implies that $\alpha(G)+\mu(G)=n\left(  G\right)  -1$, i.e., $G$ is a
$1$-K\"{o}nig-Egerv\'{a}ry graph.
\end{proof}

Notice that there exist $1$-K\"{o}nig-Egerv\'{a}ry graphs that are neither
edge almost K\"{o}nig-Egerv\'{a}ry graphs, nor vertex almost
K\"{o}nig-Egerv\'{a}ry graphs; e.g.\textit{,} $pK_{1}+K_{p+1}$, $pK_{1}%
+K_{p+2}$.

In continuation of Lemma \ref{Larson lemma}, we proceed with the following.

\begin{theorem}
\label{th2}\emph{(i)} A graph $G$ is vertex almost K\"{o}nig-Egerv\'{a}ry if
and only if it is $1$-K\"{o}nig-Egerv\'{a}ry and some $v\in V(G)$ is neither
$\alpha$-critical nor $\mu$-critical.

\emph{(ii)} A graph $G$ is edge almost K\"{o}nig-Egerv\'{a}ry if and only if
it is $1$-K\"{o}nig-Egerv\'{a}ry and some $e\in E(G)$ is $\alpha$-critical and
non-$\mu$-critical.
\end{theorem}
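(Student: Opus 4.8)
The plan is to reduce both biconditionals to the single arithmetic identity $\alpha(G-a)+\mu(G-a)=n(G-a)$ that says ``$G-a$ is K\"{o}nig-Egerv\'{a}ry'' (for $a$ a vertex or an edge), combined with the one-step bounds of Lemma \ref{lem1}, the already-proved Theorem \ref{th3}, and Lemma \ref{Larson lemma}. In every direction the non-K\"{o}nig-Egerv\'{a}ry hypothesis is free once $G$ is known to be $1$-K\"{o}nig-Egerv\'{a}ry, since $\kappa(G)=1$ forces $\alpha(G)+\mu(G)=n(G)-1\neq n(G)$.

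For part \emph{(i)} the forward direction is essentially immediate: if $G$ is vertex almost K\"{o}nig-Egerv\'{a}ry, then Theorem \ref{th3} gives that $G$ is $1$-K\"{o}nig-Egerv\'{a}ry, and Lemma \ref{Larson lemma} supplies a vertex $v$ with $\alpha(G-v)=\alpha(G)$ and $\mu(G-v)=\mu(G)$, which is precisely the statement that $v$ is neither $\alpha$-critical nor $\mu$-critical. For the converse I would assume $G$ is $1$-K\"{o}nig-Egerv\'{a}ry with such a vertex $v$ and compute $\alpha(G-v)+\mu(G-v)=\alpha(G)+\mu(G)=n(G)-1=n(G-v)$, so that $G-v$ is K\"{o}nig-Egerv\'{a}ry; since $G$ itself is not, $G$ is vertex almost K\"{o}nig-Egerv\'{a}ry by definition.

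For part \emph{(ii)} the converse is again a one-line computation: if $e$ is $\alpha$-critical and non-$\mu$-critical, then $\alpha(G-e)=\alpha(G)+1$ and $\mu(G-e)=\mu(G)$, whence $\alpha(G-e)+\mu(G-e)=\alpha(G)+\mu(G)+1=n(G)=n(G-e)$, making $G-e$ K\"{o}nig-Egerv\'{a}ry while $G$ is not. The forward direction carries the only real subtlety. Starting from $\alpha(G-e)+\mu(G-e)=n(G)$ and $\alpha(G)+\mu(G)=n(G)-1$, deleting $e$ raises the sum $\alpha+\mu$ by exactly one; by Lemma \ref{lem1} the admissible values are $\alpha(G-e)\in\{\alpha(G),\alpha(G)+1\}$ and $\mu(G-e)\in\{\mu(G)-1,\mu(G)\}$, so the only combination summing to $+1$ is $\alpha(G-e)=\alpha(G)+1$ together with $\mu(G-e)=\mu(G)$, i.e.\ $e$ is $\alpha$-critical and non-$\mu$-critical. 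Alternatively one may invoke Corollary \ref{cor25} to discard at once the two ``sum-preserving'' edge types, then use the sign of the change to eliminate the $\mu$-critical/non-$\alpha$-critical case (which would lower the sum).

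The main obstacle, such as it is, is exactly this bookkeeping in the forward direction of \emph{(ii)}: one must carefully exclude both the case where $e$ is simultaneously $\alpha$- and $\mu$-critical (leaving the sum unchanged) and the case where $e$ is $\mu$-critical but non-$\alpha$-critical (decreasing it), leaving the $\alpha$-critical/non-$\mu$-critical type as the unique survivor. Everything else is direct substitution into the defining equation of a K\"{o}nig-Egerv\'{a}ry graph.
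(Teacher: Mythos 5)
Your proposal is correct and follows essentially the same route as the paper: Theorem \ref{th3} supplies the $1$-K\"{o}nig-Egerv\'{a}ry property in the forward directions, the converses are the same direct substitution into $\alpha+\mu=n$, and your case bookkeeping for the forward direction of \emph{(ii)} is just an explicit rephrasing of the paper's squeeze $n(G)=\alpha(G-e)+\mu(G-e)\leq\alpha(G)+1+\mu(G)<n(G)+1$ via Lemma \ref{lem1}. The only cosmetic difference is that in \emph{(i)} you quote Lemma \ref{Larson lemma} where the paper rederives the same equalities inline.
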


\begin{proof}
Since $G$ is not a K\"{o}nig-Egerv\'{a}ry graph, we know that $\alpha
(G)+\mu(G)<n\left(  G\right)  $.

\emph{(i) }Assume that\emph{ }$G$ is vertex almost K\"{o}nig-Egerv\'{a}ry. By
Theorem \ref{th3}, $G$ is also $1$-K\"{o}nig-Egerv\'{a}ry.

There exists $v\in V\left(  G\right)  $ such that $G-v$ is a
K\"{o}nig-Egerv\'{a}ry graph.\ \ According to Lemma \ref{lem1}\textit{(ii)}
and \emph{(iii)}, we get%
\[
n\left(  G\right)  -1=\alpha(G-v)+\mu(G-v)\leq\alpha(G)+\mu(G)<n\left(
G\right)  ,
\]
which implies that $\alpha(G-v)=\alpha(G)$ and $\mu(G-v)=\mu(G)$, and these
mean that $v\in V(G)$ is neither $\alpha$-critical nor $\mu$-critical.

The converse is clear, because $\alpha(G-v)+\mu(G-v)=\alpha(G)+\mu(G)=n\left(
G\right)  -1$.

\emph{(ii) }Suppose that\emph{ }$G$ is vertex almost K\"{o}nig-Egerv\'{a}ry.
Then, by Theorem \ref{th3}, $G$ is also $1$-K\"{o}nig-Egerv\'{a}ry.

There exists $xy\in E\left(  G\right)  $ such that $G-xy$ is a
K\"{o}nig-Egerv\'{a}ry graph.\ \ According to Lemma \ref{lem1}\textit{(i)} and
\emph{(iii)}, we get%
\[
n\left(  G\right)  =\alpha(G-xy)+\mu(G-xy)\leq\alpha(G)+1+\mu(G)<n\left(
G\right)  +1,
\]
which implies that $\alpha(G-xy)=\alpha\left(  G\right)  +1$ and
$\mu(G-xy)=\mu\left(  G\right)  $, i.e., the \ edge $xy$ is $\alpha$-critical
and non-$\mu$-critical.

Conversely, we have that $\alpha(G-xy)=\alpha\left(  G\right)  +1$,
$\mu(G-xy)=\mu\left(  G\right)  $, and $\alpha(G)+\mu(G)=n\left(  G\right)
-1$, which ensures that
\[
\alpha(G-xy)+\mu(G-xy)=\alpha(G)+\mu(G)+1=n\left(  G\right)  ,
\]
and this means that $G$ is an edge almost K\"{o}nig-Egerv\'{a}ry graph.
\end{proof}

Recall that a graph is\textit{ almost bipartite} if it has a unique odd cycle
\cite{LevMan2022}.

\begin{lemma}
\label{lem84}\cite{LevMan2022} If $G$ is an almost bipartite graph, then
$n(G)-1\leq\alpha(G)+\mu(G)\leq n(G)$.
\end{lemma}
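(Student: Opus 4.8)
The upper bound $\alpha(G)+\mu(G)\leq n(G)$ needs nothing new: it is already part of the chain of inequalities quoted from \cite{BGL2002}, valid for \emph{every} graph. Thus the entire task reduces to proving the lower bound $\alpha(G)+\mu(G)\geq n(G)-1$, and the plan is to reach a bipartite graph from $G$ by deleting a single well-chosen edge.

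First I would invoke the hypothesis to fix the unique odd cycle $C$ of $G$ and select any edge $e=xy\in E(C)$. The crucial structural claim is that $G-e$ is bipartite. Since deleting an edge can never create a new cycle, every odd cycle of $G-e$ is already an odd cycle of $G$; by the uniqueness assumption such a cycle must coincide with $C$, yet $C$ does not survive in $G-e$ because $e$ lies on it. Hence $G-e$ contains no odd cycle and is therefore bipartite, so by K\"{o}nig's theorem it is a K\"{o}nig-Egerv\'{a}ry graph and
\[
\alpha(G-e)+\mu(G-e)=n(G-e)=n(G).
\]

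Next I would combine this identity with the monotonicity estimates of Lemma \ref{lem1}. Part \emph{(i)} yields $\alpha(G-e)\leq\alpha(G)+1$, while part \emph{(iii)} yields $\mu(G-e)\leq\mu(G)$. Substituting both,
\[
n(G)=\alpha(G-e)+\mu(G-e)\leq\bigl(\alpha(G)+1\bigr)+\mu(G),
\]
which rearranges immediately to $\alpha(G)+\mu(G)\geq n(G)-1$, as desired.

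The one genuinely delicate point is the bipartiteness of $G-e$, and it hinges entirely on reading ``almost bipartite'' as possessing a \emph{unique} odd cycle: one must argue both that edge deletion cannot manufacture an odd cycle and that the sole odd cycle is annihilated precisely because $e$ was chosen on it. Once that is secured, everything else is routine arithmetic with the bounds of Lemma \ref{lem1}; in particular no case distinction on connectivity is required, since every component of $G$ other than the one carrying $C$ is bipartite to begin with.
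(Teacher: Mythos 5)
Your proof is correct. The paper itself states this lemma without proof, citing \cite{LevMan2022}, so there is no internal argument to compare against; but your derivation is sound and self-contained: the observation that edge deletion cannot create cycles, so removing an edge of the unique odd cycle yields a bipartite (hence K\"{o}nig--Egerv\'{a}ry) graph, combined with the bounds $\alpha(G-e)\leq\alpha(G)+1$ and $\mu(G-e)\leq\mu(G)$ from Lemma \ref{lem1}, gives exactly $\alpha(G)+\mu(G)\geq n(G)-1$, and the upper bound is the general inequality quoted from \cite{BGL2002}. This is the natural argument and I see no gap.
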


Consequently, one may say that each almost bipartite graph is either a
K\"{o}nig-Egerv\'{a}ry graph or a $1$-K\"{o}nig-Egerv\'{a}ry graph.

\begin{corollary}
\label{cor3}If $G$\ is an almost bipartite graph, then the following
assertions are equivalent:

\emph{(i)} $G$ is a $1$-K\"{o}nig-Egerv\'{a}ry graph;

\emph{(ii)} $G$ is a vertex \textit{almost} K\"{o}nig-Egerv\'{a}ry\textit{
graph};

\emph{(iii)} $G$ is an edge almost K\"{o}nig-Egerv\'{a}ry\textit{ graph}.
\end{corollary}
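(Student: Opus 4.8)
The plan is to establish the three-way equivalence by pairing Theorem \ref{th3} with the defining structural feature of almost bipartite graphs, namely the presence of a \emph{single} odd cycle. Two of the four implications cost nothing: since a vertex almost K\"{o}nig-Egerv\'{a}ry graph and an edge almost K\"{o}nig-Egerv\'{a}ry graph are each automatically $1$-K\"{o}nig-Egerv\'{a}ry by Theorem \ref{th3}, the implications \emph{(ii)} $\Rightarrow$ \emph{(i)} and \emph{(iii)} $\Rightarrow$ \emph{(i)} hold at once and do not even use the almost bipartite hypothesis. So the real work is to prove \emph{(i)} $\Rightarrow$ \emph{(iii)} and \emph{(i)} $\Rightarrow$ \emph{(ii)}, and this is exactly where the almost bipartite structure enters.

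For \emph{(i)} $\Rightarrow$ \emph{(iii)}, I would let $C$ denote the unique odd cycle of $G$ and choose any edge $e$ lying on $C$. Deleting an edge cannot create a cycle, so $G-e$ has strictly fewer cycles than $G$ and, crucially, the only odd cycle $C$ has been broken; hence $G-e$ contains no odd cycle and is therefore bipartite. Every bipartite graph is K\"{o}nig-Egerv\'{a}ry (K\"{o}nig's classical theorem, equivalently $\alpha+\mu=n$), so $G-e$ is K\"{o}nig-Egerv\'{a}ry. Because a $1$-K\"{o}nig-Egerv\'{a}ry graph satisfies $\alpha(G)+\mu(G)=n(G)-1$ and is thus not K\"{o}nig-Egerv\'{a}ry, the existence of this edge $e$ shows directly from the definition that $G$ is edge almost K\"{o}nig-Egerv\'{a}ry.

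For \emph{(i)} $\Rightarrow$ \emph{(ii)} the argument runs in parallel with a vertex in place of an edge: pick any vertex $v$ on $C$. Deleting $v$ removes the edges of $C$ incident with it and cannot produce a new cycle, so again $G-v$ has no odd cycle, is bipartite, and hence K\"{o}nig-Egerv\'{a}ry; since $G$ is not K\"{o}nig-Egerv\'{a}ry, this exhibits $G$ as vertex almost K\"{o}nig-Egerv\'{a}ry. Chaining the four implications yields \emph{(i)} $\Leftrightarrow$ \emph{(ii)} $\Leftrightarrow$ \emph{(iii)}.

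I expect no serious obstacle; the entire content resides in the uniqueness of the odd cycle. The one point to state carefully is the claim that deleting a single vertex or edge of $G$ leaves \emph{no} odd cycle whatsoever, which is precisely where uniqueness of the odd cycle of an almost bipartite graph is invoked, combined with the elementary fact that vertex or edge deletion never creates a new cycle. After bipartiteness is secured, the passage to K\"{o}nig-Egerv\'{a}ry is immediate, and the ``$G$ is not K\"{o}nig-Egerv\'{a}ry'' clause required by the two almost-type definitions follows automatically from $\alpha(G)+\mu(G)=n(G)-1$.
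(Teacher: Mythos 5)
Your proposal is correct, and it fills in the argument in exactly the way the paper intends (the corollary is stated without an explicit proof, immediately after Lemma \ref{lem84}): the implications \emph{(ii)}~$\Rightarrow$~\emph{(i)} and \emph{(iii)}~$\Rightarrow$~\emph{(i)} come from Theorem \ref{th3}, while for the converses one deletes a vertex or an edge of the unique odd cycle, leaving a bipartite and hence K\"{o}nig-Egerv\'{a}ry graph, and the non-K\"{o}nig-Egerv\'{a}ry clause is automatic from $\alpha(G)+\mu(G)=n(G)-1$. No gaps.
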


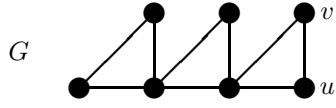
\begin{figure}[h]
\setlength{\unitlength}{1cm}\begin{picture}(5,1.2)\thicklines
\multiput(5,0)(1,0){4}{\circle*{0.29}}
\multiput(6,1)(1,0){3}{\circle*{0.29}}
\put(5,0){\line(1,0){3}}
\put(5,0){\line(1,1){1}}
\put(6,0){\line(1,1){1}}
\put(6,0){\line(0,1){1}}
\put(7,0){\line(1,1){1}}
\put(7,0){\line(0,1){1}}
\put(8,0){\line(0,1){1}}
\put(8.3,0){\makebox(0,0){$u$}}
\put(8.3,1){\makebox(0,0){$v$}}
\put(4.2,0.5){\makebox(0,0){$G$}}
\end{picture}\caption{A non almost bipartite $1$-K\"{o}nig-Egerv\'{a}ry
graph.}%
\label{fig22}%
\end{figure}

By Theorem \ref{th2}, $G$ from Figure \ref{fig22} is both a vertex and an edge
almost K\"{o}nig-Egerv\'{a}ry\textit{ }graph (since the vertex $v$ is neither
$\alpha$-critical nor $\mu$-critical, while the edge $uv$ is $\alpha$-critical
and non-$\mu$-critical).

\section{The $\varrho_{v}$-inequality for $1$-K\"{o}nig-Egerv\'{a}ry graphs}

Notice that for every $n\geq3$, the complete graph $K_{n}$ is not
K\"{o}nig-Egerv\'{a}ry; the same is true for $K_{n}-v$, whenever $n\geq4$.
However, $K_{3}-v$ is a K\"{o}nig-Egerv\'{a}ry graph, while $K_{4}-v$ is a
$1$-K\"{o}nig-Egerv\'{a}ry graph, for every vertex $v$.

\begin{theorem}
\label{th8}If there is a vertex $v\in V\left(  G\right)  $, such that $G-v$ is
a K\"{o}nig-Egerv\'{a}ry graph, then $G$ is either K\"{o}nig-Egerv\'{a}ry or
$1$-K\"{o}nig-Egerv\'{a}ry.
\end{theorem}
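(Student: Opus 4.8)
The plan is to pin the quantity $\alpha(G)+\mu(G)$ into the two-element window $\{n(G)-1,\,n(G)\}$ by sandwiching it between a lower bound coming from the deletion hypothesis and the universal upper bound for all graphs. First I would record that since $G-v$ is K\"onig-Egerv\'ary, by definition
\[
\alpha(G-v)+\mu(G-v)=n(G-v)=n(G)-1 .
\]

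Next I would produce the lower bound using monotonicity under vertex deletion. Lemma \ref{lem1}\emph{(ii)} gives $\alpha(G-v)\leq\alpha(G)$ and Lemma \ref{lem1}\emph{(iii)} gives $\mu(G-v)\leq\mu(G)$; adding these and substituting the previous identity yields
\[
n(G)-1=\alpha(G-v)+\mu(G-v)\leq\alpha(G)+\mu(G) .
\]

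For the matching upper bound I would simply invoke the universal inequality $\alpha(G)+\mu(G)\leq n(G)$ recorded in the Introduction (from \cite{BGL2002}). Combining the two displays gives
\[
n(G)-1\leq\alpha(G)+\mu(G)\leq n(G),
\]
so the K\"onig deficiency satisfies $\kappa(G)\in\{0,1\}$; equivalently, $G$ is either K\"onig-Egerv\'ary (when $\alpha(G)+\mu(G)=n(G)$) or $1$-K\"onig-Egerv\'ary (when $\alpha(G)+\mu(G)=n(G)-1$).

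I do not expect a genuine obstacle here: the entire argument is a short chaining of inequalities, and the only thing to be careful about is the direction of monotonicity—namely that deleting a vertex can lower neither parameter by more than one and cannot raise either, so the sum $\alpha+\mu$ for $G-v$ is at most that for $G$. The conceptual point worth stating explicitly is simply that the lower bound from the hypothesis and the always-valid upper bound are exactly one apart, which forces the dichotomy.
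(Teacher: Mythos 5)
Your proof is correct and uses exactly the same ingredients as the paper's: the identity $\alpha(G-v)+\mu(G-v)=n(G)-1$, the monotonicity bounds of Lemma \ref{lem1}\emph{(ii)},\emph{(iii)}, and the universal inequality $\alpha(G)+\mu(G)\leq n(G)$. The paper merely organizes the argument as a case analysis on the possible values of $\alpha(G-v)$ and $\mu(G-v)$, whereas you sum the inequalities directly into the sandwich $n(G)-1\leq\alpha(G)+\mu(G)\leq n(G)$; both are complete.
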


\begin{proof}
By definition, we know that
\[
\alpha(G-v)+\mu(G-v)=n\left(  G-v\right)  =n\left(  G\right)  -1.
\]

\textit{Case 1.} $\alpha(G-v)=\alpha(G)$.

Thus $\alpha(G)+\mu(G-v)=n\left(  G\right)  -1$. If $\mu(G-v)=\mu(G)$, then
$\alpha(G)+\mu(G)=n\left(  G\right)  -1$, which means that $G$ is a
$1$-K\"{o}nig-Egerv\'{a}ry graph; otherwise, $\mu(G-v)=\mu(G)-1$ and,
consequently, $\alpha(G)+\mu(G)=n\left(  G\right)  $, i.e., $G$ is a
K\"{o}nig-Egerv\'{a}ry graph.

\textit{Case 2.} $\alpha(G-v)=\alpha(G)-1$.

Thus $\alpha(G)-1+\mu(G-v)=n\left(  G\right)  -1$. If $\mu(G-v)=\mu(G)$, then
$\alpha(G)+\mu(G)=n\left(  G\right)  $, which means that $G$ is a
K\"{o}nig-Egerv\'{a}ry graph; otherwise, $\mu(G-v)=\mu(G)-1$ and,
consequently, $\alpha(G)+\mu(G)=n\left(  G\right)  +1$, which is impossible,
as $\alpha(G)+\mu(G)\leq n\left(  G\right)  $ for every graph $G$.
\end{proof}

\begin{corollary}
If $G$ is a K\"{o}nig-Egerv\'{a}ry graph, then $G+v$, where $v\notin V\left(
G\right)  $ and $N_{G+v}\left(  v\right)  =A\subseteq V\left(  G\right)  $, is
either K\"{o}nig-Egerv\'{a}ry or $1$-K\"{o}nig-Egerv\'{a}ry.
\end{corollary}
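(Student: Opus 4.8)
The plan is to recognize this statement as an immediate consequence of Theorem \ref{th8}. First I would set $H=G+v$, the graph obtained by adjoining the new vertex $v$ to $G$ with $N_{H}(v)=A\subseteq V(G)$. The single observation that drives everything is that deleting $v$ from $H$ recovers precisely the original graph, that is, $H-v=G$. Since $G$ is assumed to be a K\"{o}nig-Egerv\'{a}ry graph, the vertex $v\in V(H)$ witnesses the hypothesis of Theorem \ref{th8}: there exists a vertex of $H$ whose removal yields a K\"{o}nig-Egerv\'{a}ry graph.

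Applying Theorem \ref{th8} to $H$ with this distinguished vertex $v$ then forces $H=G+v$ to be either K\"{o}nig-Egerv\'{a}ry or $1$-K\"{o}nig-Egerv\'{a}ry, which is exactly the assertion. I would emphasize that no case analysis on the set $A$ is required: the conclusion depends only on the fact that $H-v$ is K\"{o}nig-Egerv\'{a}ry, and not on which vertices of $G$ the new vertex $v$ happens to be joined to. In effect, adding a single vertex to a K\"{o}nig-Egerv\'{a}ry graph can raise the K\"{o}nig deficiency by at most one.

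Because the entire content is already packaged in Theorem \ref{th8}, there is no genuine obstacle here; the only point to check carefully is the identity $H-v=G$, which holds by the very definition of $G+v$. Thus the corollary follows directly, and the proof amounts to a single invocation of Theorem \ref{th8}.
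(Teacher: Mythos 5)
Your proof is correct and matches the paper's intent exactly: the corollary is stated as an immediate consequence of Theorem \ref{th8}, obtained precisely by observing that $(G+v)-v=G$ is K\"{o}nig-Egerv\'{a}ry and then applying the theorem to $H=G+v$. No further argument is needed.
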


Consider the $1$-K\"{o}nig-Egerv\'{a}ry graphs from Figure \ref{fig125}.
$G_{1}-a$ is a K\"{o}nig-Egerv\'{a}ry graph and the vertex $a$ is neither
$\alpha$-critical nor\textit{ }$\mu$-critical, while $G_{1}-b$ is a not a
K\"{o}nig-Egerv\'{a}ry graph and the vertex $b$ is $\alpha$-critical. The
vertex $x$ is both $\alpha$-critical and\textit{ }$\mu$-critical, and
$G_{2}-x$ is a not a K\"{o}nig-Egerv\'{a}ry graph.

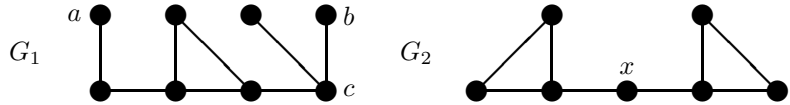
\begin{figure}[h]
\setlength{\unitlength}{1cm}\begin{picture}(5,1.2)\thicklines
\multiput(3,0)(1,0){4}{\circle*{0.29}}
\multiput(3,1)(1,0){4}{\circle*{0.29}}
\put(3,0){\line(1,0){3}}
\put(3,0){\line(0,1){1}}
\put(4,0){\line(0,1){1}}
\put(4,1){\line(1,-1){1}}
\put(5,1){\line(1,-1){1}}
\put(6,0){\line(0,1){1}}
\put(2.65,1){\makebox(0,0){$a$}}
\put(6.3,1){\makebox(0,0){$b$}}
\put(6.3,0){\makebox(0,0){$c$}}
\put(2,0.5){\makebox(0,0){$G_{1}$}}
\multiput(8,0)(1,0){5}{\circle*{0.29}}
\multiput(9,1)(2,0){2}{\circle*{0.29}}
\put(8,0){\line(1,0){4}}
\put(8,0){\line(1,1){1}}
\put(9,0){\line(0,1){1}}
\put(11,0){\line(0,1){1}}
\put(11,1){\line(1,-1){1}}
\put(10,0.3){\makebox(0,0){$x$}}
\put(7.2,0.5){\makebox(0,0){$G_{2}$}}
\end{picture}\caption{$\alpha$-critical vertices and $\mu$-critical vertices
in $1$-K\"{o}nig-Egerv\'{a}ry graphs.}%
\label{fig125}%
\end{figure}

\begin{theorem}
\label{th17}Let $G$ be a $1$-K\"{o}nig-Egerv\'{a}ry graph. Then $G-v$ is
K\"{o}nig-Egerv\'{a}ry if and only if the vertex $v$ is neither $\alpha
$-\textit{critical nor }$\mu$-\textit{critical.}
\end{theorem}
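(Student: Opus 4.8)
The plan is to translate both the hypothesis and the conclusion into equalities among $\alpha(G-v)$, $\mu(G-v)$, and $n(G-v)$, and then compare them using the elementary bounds in Lemma \ref{lem1}. Since $G$ is $1$-K\"{o}nig-Egerv\'{a}ry we have $\alpha(G)+\mu(G)=n(G)-1$, and deleting a vertex gives $n(G-v)=n(G)-1$. The whole argument rests on the two monotonicity inequalities $\alpha(G)-1\le\alpha(G-v)\le\alpha(G)$ and $\mu(G)-1\le\mu(G-v)\le\mu(G)$ supplied by Lemma \ref{lem1}(ii)--(iii).

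For the forward direction, I would assume $G-v$ is K\"{o}nig-Egerv\'{a}ry, so that $\alpha(G-v)+\mu(G-v)=n(G-v)=n(G)-1=\alpha(G)+\mu(G)$. Rewriting this as $(\alpha(G)-\alpha(G-v))+(\mu(G)-\mu(G-v))=0$ and noting that both summands are nonnegative by Lemma \ref{lem1}, each must vanish, giving $\alpha(G-v)=\alpha(G)$ and $\mu(G-v)=\mu(G)$. By definition these say precisely that $v$ is neither $\alpha$-critical nor $\mu$-critical.

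For the converse, assuming $v$ is neither $\alpha$-critical nor $\mu$-critical gives $\alpha(G-v)=\alpha(G)$ and $\mu(G-v)=\mu(G)$ directly, whence $\alpha(G-v)+\mu(G-v)=\alpha(G)+\mu(G)=n(G)-1=n(G-v)$, so $G-v$ is K\"{o}nig-Egerv\'{a}ry.

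There is no real obstacle here: both implications collapse to the same one-line counting identity, and the only inputs are the definition of a $1$-K\"{o}nig-Egerv\'{a}ry graph and the bounds of Lemma \ref{lem1}. The statement is in fact a pointwise refinement of Theorem \ref{th2}(i), which established the same equivalence for the \emph{existence} of such a vertex; the only care needed is to keep the two deletion defects $\alpha(G)-\alpha(G-v)$ and $\mu(G)-\mu(G-v)$ separate, so that forcing their sum to be zero forces each of them to be zero individually.
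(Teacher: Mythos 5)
Your proof is correct and follows essentially the same route as the paper's: both directions reduce to the counting identity $\alpha(G-v)+\mu(G-v)=n(G)-1=\alpha(G)+\mu(G)$, with the forward direction forcing the two nonnegative deletion defects from Lemma \ref{lem1} to vanish individually. No gaps.
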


\begin{proof}
According to the definition, we know that $\alpha(G)+\mu(G)=n\left(  G\right)
-1$.

Assume that $G-v$ is a K\"{o}nig-Egerv\'{a}ry graph. Then we obtain%
\[
\alpha\left(  G-v\right)  +\mu\left(  G-v\right)  =n\left(  G-v\right)
=n\left(  G\right)  -1=\alpha(G)+\mu(G).
\]
Since both $\alpha\left(  G-v\right)  \leq\alpha\left(  G\right)  $ and
$\mu\left(  G-v\right)  \leq\mu\left(  G\right)  $, we get $\alpha\left(
G-v\right)  =\alpha\left(  G\right)  $ and $\mu\left(  G-v\right)  =\mu\left(
G\right)  $, i.e., the vertex $v$ is neither $\alpha$-critical nor $\mu$-critical.

Conversely, if $v$ is neither $\alpha$-critical nor $\mu$-critical, then
\[
\alpha\left(  G-v\right)  +\mu\left(  G-v\right)  =\alpha(G)+\mu(G)=n\left(
G\right)  -1=n\left(  G-v\right)  ,
\]
which means that $G-v$ is a K\"{o}nig-Egerv\'{a}ry graph.
\end{proof}

\begin{theorem}
\label{th11} Suppose that $A\in\mathrm{Ind}(G)$. If there is a matching from
$N_{G}(A)$ into $A$, then\textit{ every matching }from $N_{G}(A)$ into $A$ can
be enlarged to a maximum matching of $G$, and every vertex of $N_{G}(A)$ is
$\mu$-\textit{critical.}
\end{theorem}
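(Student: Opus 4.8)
The plan is to reduce both assertions to a single counting identity, namely that
$\mu(G)=\left\vert N_{G}(A)\right\vert +\mu\left(  G\left[  B\right]  \right)$,
where $B=V(G)-N_{G}[A]$, valid whenever $A$ is independent and admits a matching from $N_{G}(A)$ into $A$. First I would record the partition of $V(G)$ into the three blocks $A$, $N_{G}(A)$, and $B$, and observe two structural facts: independence of $A$ forbids edges inside $A$, while the definition of $B$ forbids edges between $A$ and $B$. Consequently every edge of $G$ either meets $N_{G}(A)$ or lies inside $G[B]$.

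Next I would prove the upper bound $\mu(G)\leq\left\vert N_{G}(A)\right\vert +\mu(G[B])$. Given any matching $M$, the edges of $M$ incident with $N_{G}(A)$ number at most $\left\vert N_{G}(A)\right\vert$, since each vertex of $N_{G}(A)$ is saturated at most once; the remaining edges of $M$, being disjoint from $N_{G}(A)$, lie in $G[B]$ by the structural remark above and hence number at most $\mu(G[B])$. For the enlargement direction I would start from a matching $M_{0}$ from $N_{G}(A)$ into $A$—so that $\left\vert M_{0}\right\vert =\left\vert N_{G}(A)\right\vert$—and adjoin any maximum matching $M_{1}$ of $G[B]$. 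Because $M_{0}$ is confined to $A\cup N_{G}(A)$ and $M_{1}$ to $B$, their union is a matching of size $\left\vert N_{G}(A)\right\vert +\mu(G[B])$, which attains the upper bound. This shows at once that $M_{0}\cup M_{1}$ is a maximum matching containing $M_{0}$ (so every such $M_{0}$ enlarges to a maximum matching) and that the displayed identity holds.

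For the $\mu$-critical conclusion I would fix $w\in N_{G}(A)$ and apply the identity to $G-w$. Since $w\notin A$ and $w\in N_{G}(A)$, one checks that $N_{G-w}(A)=N_{G}(A)-\{w\}$, while the outer block is unchanged, $V(G-w)-N_{G-w}[A]=B$. Deleting from $M_{0}$ its unique edge at $w$ leaves a matching from $N_{G-w}(A)$ into $A$ in $G-w$, so the identity applies there and gives $\mu(G-w)=\left(  \left\vert N_{G}(A)\right\vert -1\right)  +\mu(G[B])=\mu(G)-1<\mu(G)$; hence $w$ is $\mu$-critical.

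The main obstacle I anticipate is the bookkeeping of this last step: one must verify that deleting $w$ shrinks the neighborhood by exactly one vertex and leaves $G[B]$ untouched, so that the two applications of the identity differ by precisely $1$ rather than dropping $\mu$ by more. Everything else rests on the edge-counting of the upper bound, which is the real engine of the argument and is entirely elementary once the three-block partition is in place.
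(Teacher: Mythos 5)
Your proof is correct and follows essentially the same route as the paper: both arguments rest on the three-block partition $A$, $N_{G}(A)$, $B=V(G)-N_{G}[A]$ and the resulting additivity $\mu(G)=\left\vert N_{G}(A)\right\vert +\mu\left(G\left[B\right]\right)$, applied once to $G$ and once to $G-w$. The only cosmetic difference is that you establish this identity by a direct edge count plus an explicit extremal matching, whereas the paper reaches the same conclusion by an exchange argument showing every maximum matching must saturate all of $N_{G}(A)$.
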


\begin{proof}
Let $M$ be a maximum matching of $G$, $M\left(  N_{G}(A)\right)  $ be the
vertices of $N_{G}(A)$ that are saturated by $M$, and $M_{1}$ be a
matching\textit{ }from $N_{G}(A)$ into $A$. If $\left\vert M\left(
N_{G}(A)\right)  \right\vert <\left\vert N_{G}(A)\right\vert $, then $M\cup
M_{1}-M_{2}$, where $M_{2}=\left\{  xy:xy\in M\text{ and }x\in M\left(
N_{G}(A)\right)  \right\}  $, is a matching of $G$ larger than $M$,
contradicting the maximality of $M$. Therefore, $\left\vert M\left(
N_{G}(A)\right)  \right\vert =\left\vert N_{G}(A)\right\vert $, and $M\cup
M_{1}-M_{2}$ is a maximum matching of $G$ that enlarges $M_{2}$.

In other words, we have
\[
\mu\left(  G\right)  =\mu\left(  G\left[  N_{G}\left[  A\right]  \right]
\right)  +\mu\left(  G\left[  V\left(  G\right)  -N_{G}\left[  A\right]
\right]  \right)  ,
\]
which means that every matching from $N_{G}(A)$ into $A$ can be enlarged to a
maximum matching of $G$.

Second, let $v\in N_{G}(A)$ and $H=G-v$. Then there is a matching from
$N_{H}(A)=$ $N_{G}(A)-\left\{  v\right\}  $ into $A$, and consequently,
\begin{gather*}
\mu\left(  G-v\right)  =\mu\left(  H\right)  =\mu\left(  H\left[  N_{H}\left[
A\right]  \right]  \right)  +\mu\left(  H\left[  V\left(  H\right)
-N_{H}\left[  A\right]  \right]  \right)  =\\
\mu\left(  G\left[  N_{G}\left[  A\right]  -\left\{  v\right\}  \right]
\right)  +\mu\left(  G\left[  V\left(  G\right)  -N_{G}\left[  A\right]
\right]  \right)  =\\
\mu\left(  G\left[  N_{G}\left[  A\right]  \right]  \right)  -1+\mu\left(
G\left[  V\left(  G\right)  -N_{G}\left[  A\right]  \right]  \right)
=\mu\left(  G\right)  -1.
\end{gather*}
Thus, $v$ is a $\mu$-critical vertex.
\end{proof}

Let us consider the graphs from Figure \ref{fig55}. The set $\left\{
a_{1},a_{4}\right\}  \in\mathrm{Ind}(G_{1})$ and there is a matching from
$N\left(  \left\{  a_{1},a_{4}\right\}  \right)  $ \ into $\left\{
a_{1},a_{4}\right\}  $; consequently, this matching is included in a maximum
matching of $G_{1}$. On the other hand, the set $\left\{  v\right\}
\in\mathrm{Ind}(G_{2})$ and there no a matching from $N\left(  \left\{
v\right\}  \right)  $ \ into $\left\{  v\right\}  $; however, every matching
of $G_{2}$ can be enlarged to a maximum matching. This fact shows that the
converse of Theorem \ref{th11} is not generally true.

\begin{figure}[h]
\setlength{\unitlength}{1cm}\begin{picture}(5,1.2)\thicklines
\multiput(2,0)(1,0){4}{\circle*{0.29}}
\multiput(3,1)(1,0){2}{\circle*{0.29}}
\put(2,0){\line(1,0){3}}
\put(3,1){\line(1,0){1}}
\put(3,0){\line(0,1){1}}
\put(4,1){\line(1,-1){1}}
\put(2,0.35){\makebox(0,0){$a_{1}$}}
\put(2.65,1){\makebox(0,0){$a_{2}$}}
\put(2.65,0.35){\makebox(0,0){$a_{4}$}}
\put(4.35,1){\makebox(0,0){$a_{3}$}}
\put(4,0.35){\makebox(0,0){$a_{5}$}}
\put(5.35,0){\makebox(0,0){$a_{6}$}}
\put(1,0.5){\makebox(0,0){$G_{1}$}}
\multiput(8,0)(1,0){4}{\circle*{0.29}}
\multiput(9,1)(1,0){2}{\circle*{0.29}}
\put(8,0){\line(1,0){3}}
\put(9,0){\line(0,1){1}}
\put(10,1){\line(1,-1){1}}
\put(10,0){\line(0,1){1}}
\put(9.65,1){\makebox(0,0){$v$}}
\put(7,0.5){\makebox(0,0){$G_{2}$}}
\end{picture}\caption{$\mu\left(  G_{1}\right)  =3$ and $\mu\left(
G_{2}\right)  =2$.}%
\label{fig55}%
\end{figure}

Theorem \ref{th11} and Theorem \ref{th3}\emph{(iii)} immediately imply the following.

\begin{corollary}
If $A$ is a critical independent set in a graph $G$, then\textit{ every
matching }from $N_{G}(A)$ into $A$ can be enlarged to a maximum matching of
$G$, and every vertex of $N_{G}(A)$ is $\mu$-\textit{critical.}
\end{corollary}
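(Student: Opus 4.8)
The plan is to read this corollary as a direct specialization of Theorem \ref{th11}: the only content is to check that the matching hypothesis of that theorem is automatically met when $A$ is critical, after which the two conclusions follow verbatim.

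First, I would supply the missing hypothesis. Theorem \ref{th11} assumes ``there is a matching from $N_{G}(A)$ into $A$''; I would obtain this for free from Theorem \ref{th333}\emph{(iii)}, which asserts precisely that every critical independent set $A$ admits a matching from $N(A)$ into $A$. Thus a critical independent set is, in particular, a set satisfying the premise of Theorem \ref{th11}, with no further conditions on $G$ required.

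Second, I would apply Theorem \ref{th11} to this $A$. Its conclusion states both that every matching from $N_{G}(A)$ into $A$ can be enlarged to a maximum matching of $G$ (equivalently, that $\mu(G)=\mu(G[N_{G}[A]])+\mu(G[V(G)-N_{G}[A]])$) and that every vertex of $N_{G}(A)$ is $\mu$-critical. These are exactly the two assertions claimed, so the statement is settled by composition.

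There is no substantive obstacle here, as the result is a straightforward chaining of two earlier theorems; the only point deserving attention is confirming that the cited matching-existence fact is the unconditional one for critical sets, namely Theorem \ref{th333}\emph{(iii)}, rather than a statement carrying extra assumptions. Once that reference is pinned down, the corollary holds for an arbitrary graph $G$ with no additional hypotheses.
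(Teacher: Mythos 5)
Your proposal is correct and matches the paper's own derivation exactly: the paper states that the corollary follows immediately from Theorem \ref{th11} together with the fact that every critical independent set admits a matching from its neighborhood into it (the paper cites this as ``Theorem \ref{th3}\emph{(iii)}'', evidently a slip for Theorem \ref{th333}\emph{(iii)}, which is precisely the reference you pinned down). Nothing further is needed.
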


\begin{proposition}
\label{prop11}If $A$ is a critical independent set in $G$, then $\emph{core}%
\left(  G\right)  \cap N_{G}\left(  A\right)  =\emptyset$, and, consequently,
$\emph{core}\left(  G\right)  \cap N_{G}\left(  \emph{diadem}\left(  G\right)
\right)  =\emptyset$.
\end{proposition}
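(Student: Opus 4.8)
The plan is to obtain the first equality directly from the embedding property of critical independent sets, arguing by contradiction. Suppose some vertex $v$ lies in $\mathrm{core}(G)\cap N_{G}(A)$. By Theorem \ref{th333}\emph{(i)}, the critical independent set $A$ is contained in some $S\in\Omega(G)$. Since $v\in\mathrm{core}(G)=\bigcap\{S:S\in\Omega(G)\}$ belongs to \emph{every} maximum independent set, in particular $v\in S$. On the other hand, $v\in N_{G}(A)$ means $v$ is adjacent to some $a\in A$, and $A\subseteq S$ forces $a\in S$. Thus $v$ and $a$ are two adjacent vertices of the independent set $S$ (and $v\neq a$ since there are no loops), which is impossible. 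Hence $\mathrm{core}(G)\cap N_{G}(A)=\emptyset$.

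For the consequence, I would reduce to the case just handled. The cleanest route is to use that the neighborhood operator distributes over unions, $N_{G}\left(\bigcup_{i}A_{i}\right)=\bigcup_{i}N_{G}(A_{i})$. Writing $\emph{diadem}(G)$ as the union of all critical independent sets of $G$, this gives $N_{G}(\emph{diadem}(G))=\bigcup\{N_{G}(A):A\text{ is a critical independent set}\}$; intersecting with $\mathrm{core}(G)$ and applying the first part to each $A$ then yields $\mathrm{core}(G)\cap N_{G}(\emph{diadem}(G))=\emptyset$. An equally valid alternative is to invoke Theorem \ref{th444}: since a finite union of critical sets is again critical (and independent), $\emph{diadem}(G)$ is itself a single critical independent set, so the second equality is simply the first one applied with $A=\emph{diadem}(G)$.

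I do not anticipate a genuine obstacle here: the entire argument rests on the one observation that a $\mathrm{core}$-vertex together with any neighbor it has inside $A$ would both be forced into the maximum independent set $S\supseteq A$ furnished by Theorem \ref{th333}\emph{(i)}, violating independence. The only point deserving a line of care is the passage to $\emph{diadem}(G)$, which I would justify either by the distributivity of $N_{G}$ over unions or by citing Theorem \ref{th444} to certify that the diadem is a single critical independent set to which the first part applies.
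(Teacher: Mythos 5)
Your proof is correct and follows essentially the same route as the paper: the first equality comes from extending the critical independent set $A$ to some $S\in\Omega(G)$ via Theorem \ref{th333}\emph{(i)}, noting $\mathrm{core}(G)\subseteq S$ and using the independence of $S$ (the paper phrases this as $\mathrm{core}(G)\cap N_{G}(A)\subseteq\mathrm{core}(G)\cap N_{G}(S)=\emptyset$ rather than by contradiction), and the second equality is obtained exactly as you do, by distributing $N_{G}$ and the intersection over the union of all critical independent sets. Your alternative via Theorem \ref{th444} is also consistent with how the paper treats $\emph{diadem}(G)$ elsewhere.
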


\begin{proof}
By Theorem \ref{th3}, there exists a maximum independent set $S$, such that
$A\subseteq S$. By definition, $\emph{core}\left(  G\right)  \subseteq S$, as
well. Hence,
\[
\emph{core}\left(  G\right)  \cap N_{G}\left(  A\right)  \subseteq
\emph{core}\left(  G\right)  \cap N_{G}\left(  S\right)  =\emptyset.
\]
Thus, $\emph{core}\left(  G\right)  \cap N_{G}\left(  A\right)  =\emptyset$.

Finally, we obtain
\begin{align*}
\emph{core}\left(  G\right)  \cap N_{G}\left(  \emph{diadem}\left(  G\right)
\right)   &  =\emph{core}\left(  G\right)  \cap N\left(
%TCIMACRO{\dbigcup \limits_{A\in Crit(G)}}%
%BeginExpansion
{\displaystyle\bigcup\limits_{A\in Crit(G)}}
%EndExpansion
A\right)  =\\
\emph{core}\left(  G\right)  \cap%
%TCIMACRO{\dbigcup \limits_{A\in Crit(G)}}%
%BeginExpansion
{\displaystyle\bigcup\limits_{A\in Crit(G)}}
%EndExpansion
N\left(  A\right)   &  =%
%TCIMACRO{\dbigcup \limits_{A\in Crit(G)}}%
%BeginExpansion
{\displaystyle\bigcup\limits_{A\in Crit(G)}}
%EndExpansion
\left(  \emph{core}\left(  G\right)  \cap N\left(  A\right)  \right)
=\emptyset,
\end{align*}
as claimed.
\end{proof}

\begin{example}
\label{example1}Consider the $1$-K\"{o}nig-Egerv\'{a}ry graphs from Figure
\ref{fig124}: $n\left(  G_{1}\right)  =n\left(  G_{2}\right)  =8$, $d\left(
G_{1}\right)  =d\left(  G_{2}\right)  =1$, $\xi\left(  G_{1}\right)
=\xi\left(  G_{2}\right)  =2$, $\alpha^{\prime}\left(  G_{1}\right)
=\alpha^{\prime}\left(  G_{2}\right)  =3$, while $\varrho\left(  G_{1}\right)
=4$ and $\varrho\left(  G_{2}\right)  =3$.
\end{example}

\begin{figure}[h]
\setlength{\unitlength}{1cm}\begin{picture}(5,1.2)\thicklines
\multiput(3,0)(1,0){4}{\circle*{0.29}}
\multiput(3,1)(1,0){4}{\circle*{0.29}}
\put(3,0){\line(1,0){3}}
\put(3,0){\line(0,1){1}}
\put(3,1){\line(1,-1){1}}
\put(4,1){\line(1,-1){1}}
\put(5,1){\line(1,-1){1}}
\put(6,0){\line(0,1){1}}
\put(3.65,1){\makebox(0,0){$a$}}
\put(4.65,1){\makebox(0,0){$b$}}
\put(6.3,1){\makebox(0,0){$c$}}
\put(6.3,0){\makebox(0,0){$d$}}
\put(2.2,0.5){\makebox(0,0){$G_{1}$}}
\multiput(8,0)(1,0){4}{\circle*{0.29}}
\multiput(8,1)(1,0){4}{\circle*{0.29}}
\put(8,0){\line(1,0){3}}
\put(8,0){\line(0,1){1}}
\put(8,1){\line(1,-1){1}}
\put(9,1){\line(1,-1){1}}
\put(10,0){\line(0,1){1}}
\put(11,0){\line(0,1){1}}
\put(8.65,1){\makebox(0,0){$u$}}
\put(10.3,1){\makebox(0,0){$v$}}
\put(10.3,0.3){\makebox(0,0){$w$}}
\put(11.3,1){\makebox(0,0){$x$}}
\put(11.3,0){\makebox(0,0){$y$}}
\put(7.2,0.5){\makebox(0,0){$G_{2}$}}
\end{picture}\caption{$1$-K\"{o}nig-Egerv\'{a}ry graphs with $\varrho
(G_{1})=4$ and $\varrho(G_{2})=3$.}%
\label{fig124}%
\end{figure}
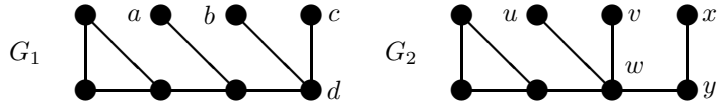

\begin{theorem}
\label{cor18}If $G$ is a $1$-K\"{o}nig-Egerv\'{a}ry graph, then
\[
\varrho_{v}\left(  G\right)  \leq n\left(  G\right)  +d\left(  G\right)
-\xi\left(  G\right)  -\beta(G).
\]

\end{theorem}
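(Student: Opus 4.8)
The plan is to translate $\varrho_{v}(G)$ into a count of vertices that are neither $\alpha$-critical nor $\mu$-critical, and then to bound the complementary set from below. Since $G$ is $1$-K\"{o}nig-Egerv\'{a}ry, Theorem \ref{th17} says that $G-v$ is K\"{o}nig-Egerv\'{a}ry precisely when $v$ is neither $\alpha$-critical nor $\mu$-critical. Writing $C_{\alpha}$ for the set of $\alpha$-critical vertices and $C_{\mu}$ for the set of $\mu$-critical vertices, this gives
\[
\varrho_{v}(G)=n(G)-\left\vert C_{\alpha}\cup C_{\mu}\right\vert ,
\]
so the desired inequality is equivalent to the lower bound $\left\vert C_{\alpha}\cup C_{\mu}\right\vert \geq\xi(G)+\beta(G)-d(G)$. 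Recall that $C_{\alpha}=\mathrm{core}(G)$, whence $\left\vert C_{\alpha}\right\vert =\xi(G)$.

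Next I would pin down the neighborhood of the diadem. By Theorem \ref{th444}, a (finite) union of critical independent sets is again critical, so $\mathrm{diadem}(G)$ is itself a critical independent set and therefore $d(\mathrm{diadem}(G))=d(G)$. Since $\left\vert \mathrm{diadem}(G)\right\vert =\beta(G)$, this yields
\[
\left\vert N_{G}(\mathrm{diadem}(G))\right\vert =\left\vert \mathrm{diadem}(G)\right\vert -d(\mathrm{diadem}(G))=\beta(G)-d(G).
\]

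The two structural ingredients then fit together. Applying the corollary following Theorem \ref{th11} with $A=\mathrm{diadem}(G)$ (a critical independent set), every vertex of $N_{G}(\mathrm{diadem}(G))$ is $\mu$-critical, so $N_{G}(\mathrm{diadem}(G))\subseteq C_{\mu}$. On the other hand, Proposition \ref{prop11} gives $\mathrm{core}(G)\cap N_{G}(\mathrm{diadem}(G))=\emptyset$, i.e.\ $C_{\alpha}$ and $N_{G}(\mathrm{diadem}(G))$ are disjoint. Since both $C_{\alpha}$ and $N_{G}(\mathrm{diadem}(G))$ are contained in $C_{\alpha}\cup C_{\mu}$ and are disjoint from each other,
\[
\left\vert C_{\alpha}\cup C_{\mu}\right\vert \geq\left\vert C_{\alpha}\right\vert +\left\vert N_{G}(\mathrm{diadem}(G))\right\vert =\xi(G)+\beta(G)-d(G),
\]
and substituting into $\varrho_{v}(G)=n(G)-\left\vert C_{\alpha}\cup C_{\mu}\right\vert $ finishes the argument.

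The main obstacle I anticipate is the step identifying the diadem as a critical independent set and converting that into the exact value $\left\vert N_{G}(\mathrm{diadem}(G))\right\vert =\beta(G)-d(G)$; everything else is just assembling disjointness and invoking the earlier results. I would take care to verify that the union defining the diadem is finite (being a union of subsets of $V(G)$) so that Theorem \ref{th444} applies iteratively, and that the difference of the diadem genuinely \emph{equals} $d(G)$ rather than merely being bounded by it, since the entire bound hinges on this equality.
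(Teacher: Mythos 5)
Your proposal is correct and follows essentially the same route as the paper: Theorem \ref{th17} to identify $\varrho_{v}(G)$ with the count of vertices that are neither $\alpha$-critical nor $\mu$-critical, the corollary of Theorem \ref{th11} to place $N_{G}(\emph{diadem}(G))$ among the $\mu$-critical vertices, Proposition \ref{prop11} for disjointness from $\mathrm{core}(G)$, and Theorem \ref{th444} to get $\left\vert N_{G}(\emph{diadem}(G))\right\vert =\beta(G)-d(G)$. Your write-up merely makes the counting step ($\varrho_{v}(G)=n(G)-\left\vert C_{\alpha}\cup C_{\mu}\right\vert$ and the disjoint lower bound) more explicit than the paper does.
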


\begin{proof}
Clearly, $\emph{diadem}\left(  G\right)  =%
%TCIMACRO{\dbigcup \limits_{A\in Crit(G)}}%
%BeginExpansion
{\displaystyle\bigcup\limits_{A\in Crit(G)}}
%EndExpansion
A=%
%TCIMACRO{\dbigcup \limits_{A\in MaxCrit(G)}}%
%BeginExpansion
{\displaystyle\bigcup\limits_{A\in MaxCrit(G)}}
%EndExpansion
A$, since every critical set is a subset of a maximum critical set (Theorem
\ref{th3}\emph{(ii)}). Hence, we infer that%
\[
N_{G}\left(  \emph{diadem}\left(  G\right)  \right)  =N\left(
%TCIMACRO{\dbigcup \limits_{A\in MaxCrit(G)}}%
%BeginExpansion
{\displaystyle\bigcup\limits_{A\in MaxCrit(G)}}
%EndExpansion
A\right)  =%
%TCIMACRO{\dbigcup \limits_{A\in MaxCrit(G)}}%
%BeginExpansion
{\displaystyle\bigcup\limits_{A\in MaxCrit(G)}}
%EndExpansion
N_{G}\left(  A\right)  .
\]
Then, Theorem \ref{th17}, Theorem \ref{th11} and Proposition \ref{prop11}
imply
\[
\varrho_{v}\left(  G\right)  \leq n\left(  G\right)  -\xi\left(  G\right)
-\left\vert
%TCIMACRO{\dbigcup \limits_{A\in MaxCrit(G)}}%
%BeginExpansion
{\displaystyle\bigcup\limits_{A\in MaxCrit(G)}}
%EndExpansion
N_{G}\left(  A\right)  \right\vert =n\left(  G\right)  -\xi\left(  G\right)
-\left\vert N_{G}\left(  \emph{diadem}\left(  G\right)  \right)  \right\vert
.
\]
By Theorem \ref{th444}, $\emph{diadem}\left(  G\right)  $ is critical in $G$,
as union of critical sets. Hence, $d\left(  G\right)  =\left\vert
\emph{diadem}\left(  G\right)  \right\vert -\left\vert N_{G}\left(
\emph{diadem}\left(  G\right)  \right)  \right\vert $. Thus,%
\[
\varrho_{v}\left(  G\right)  \leq n\left(  G\right)  +d\left(  G\right)
-\xi\left(  G\right)  -\beta(G),
\]
and this completes the proof.
\end{proof}

It is worth mentioning that the $1$-K\"{o}nig-Egerv\'{a}ry graphs in Figure
\ref{fig101} point out to the fact that Theorem \ref{cor18} presents a tight inequality:

\begin{itemize}
\item $n\left(  G_{1}\right)  =9$, $\alpha\left(  G_{1}\right)  =\mu\left(
G_{1}\right)  =4$, $d\left(  G_{1}\right)  =\xi\left(  G_{1}\right)  =0$,
$\emph{diadem}\left(  G_{1}\right)  =\left\{  u,v,w\right\}  $, $\varrho
_{v}\left(  G_{1}\right)  =6=n\left(  G_{1}\right)  +d\left(  G_{1}\right)
-\xi\left(  G_{1}\right)  -\beta\left(  G_{1}\right)  $;

\item $n\left(  G_{2}\right)  =9$, $\alpha\left(  G_{2}\right)  =\mu\left(
G_{2}\right)  =4$, $d\left(  G_{2}\right)  =0$, $\xi\left(  G_{2}\right)  =1$,
$\emph{diadem}\left(  G_{2}\right)  =\left\{  a,b\right\}  $, $\varrho
_{v}\left(  G_{2}\right)  =5<n\left(  G_{2}\right)  +d\left(  G_{2}\right)
-\xi\left(  G_{2}\right)  -\beta\left(  G_{2}\right)  $.
\end{itemize}

\begin{figure}[h]
\setlength{\unitlength}{1cm}\begin{picture}(5,1.2)\thicklines
\multiput(2,0)(1,0){5}{\circle*{0.29}}
\multiput(3,1)(1,0){4}{\circle*{0.29}}
\put(2,0){\line(1,0){1}}
\put(2,0){\line(1,1){1}}
\put(3,0){\line(0,1){1}}
\put(3,1){\line(2,-1){2}}
\put(4,1){\line(1,0){2}}
\put(4,0){\line(0,1){1}}
\put(4,0){\line(1,0){2}}
\put(5,0){\line(0,1){1}}
\put(6,0){\line(0,1){1}}
\put(3.65,0){\makebox(0,0){$u$}}
\put(5.3,0.8){\makebox(0,0){$v$}}
\put(6.35,0){\makebox(0,0){$w$}}
\put(1,0.5){\makebox(0,0){$G_{1}$}}
\multiput(8,0)(1,0){5}{\circle*{0.29}}
\multiput(11,1)(1,0){2}{\circle*{0.29}}
\put(8,1){\circle*{0.29}}
\put(9,1){\circle*{0.29}}
\put(8,0){\line(1,0){4}}
\put(8,0){\line(0,1){1}}
\put(8,1){\line(1,-1){1}}
\put(9,0){\line(0,1){1}}
\put(11,1){\line(1,0){1}}
\put(10,0){\line(1,1){1}}
\put(12,0){\line(0,1){1}}
\put(8.35,1){\makebox(0,0){$a$}}
\put(9.35,1){\makebox(0,0){$b$}}
\put(7,0.5){\makebox(0,0){$G_{2}$}}
\end{picture}\caption{Tight inequality examples.}%
\label{fig101}%
\end{figure}
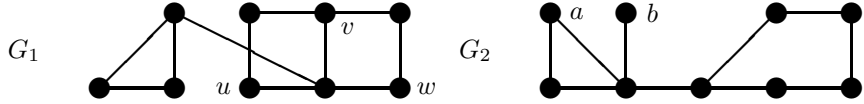

Theorem \ref{cor18} and the definition of $\alpha^{\prime}\left(  G\right)  $
imply the following.

\begin{corollary}
\label{cor2}If $G$ is $1$-K\"{o}nig-Egerv\'{a}ry graph, then
\[
\varrho\left(  G\right)  \leq n\left(  G\right)  +d\left(  G\right)
-\xi\left(  G\right)  -\alpha^{\prime}\left(  G\right)  .
\]

\end{corollary}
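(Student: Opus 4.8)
The plan is to derive this directly from Theorem \ref{cor18} by replacing $\beta(G)$ with the smaller (or equal) quantity $\alpha^{\prime}(G)$. The only thing that needs to be checked is the elementary inequality $\alpha^{\prime}(G)\leq\beta(G)$, after which the chain of inequalities is immediate. Here $\varrho(G)$ is understood to be the vertex-version count $\varrho_{v}(G)$ appearing in Theorem \ref{cor18}.

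First I would fix a maximum critical independent set $A$, so that by the definition of the critical independence number we have $\left\vert A\right\vert =\alpha^{\prime}(G)$. Since $\emph{diadem}(G)=\bigcup\{S:S\text{ is a critical independent set in }G\}$ is the union of \emph{all} critical independent sets, the particular set $A$ is one of the sets in this union, and hence $A\subseteq\emph{diadem}(G)$. Taking cardinalities gives
\[
\alpha^{\prime}(G)=\left\vert A\right\vert \leq\left\vert \emph{diadem}(G)\right\vert =\beta(G).
\]

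It then remains to chain this with Theorem \ref{cor18}: since $\alpha^{\prime}(G)\leq\beta(G)$ is equivalent to $-\beta(G)\leq-\alpha^{\prime}(G)$, we obtain
\[
\varrho(G)\leq n(G)+d(G)-\xi(G)-\beta(G)\leq n(G)+d(G)-\xi(G)-\alpha^{\prime}(G),
\]
as claimed. There is essentially no obstacle in this argument; the single substantive observation is the set-inclusion $A\subseteq\emph{diadem}(G)$ forcing $\alpha^{\prime}(G)\leq\beta(G)$, and one must merely be careful to note that this inequality goes in the direction that preserves (indeed relaxes) the upper bound of Theorem \ref{cor18}.
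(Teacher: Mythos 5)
Your proposal is correct and matches the paper's intended argument: the paper derives the corollary from Theorem \ref{cor18} together with the observation (left implicit there) that any maximum critical independent set is contained in $\emph{diadem}(G)$, whence $\alpha^{\prime}(G)\leq\beta(G)$ and the bound only weakens. You have simply written out the one-line justification the paper omits.
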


\begin{remark}
Example \ref{example1} shows that the inequality presented in Theorem
\ref{cor18} is stronger than its counterpart presented in Corollary
\ref{cor2}. To see it, pay attention to the fact that for the graph $G_{2}$ in
Figure \ref{fig124} the set $\left\{  u,v,y\right\}  $ is maximum critical
independent as well as $\left\{  u,v,x\right\}  $, which is different in
comparison with the graph $G_{1}$ in the same figure, where there is only one
maximum critical independent set $\left\{  a,b,c\right\}  $.
\end{remark}

If $G$ is a $1$-K\"{o}nig-Egerv\'{a}ry graph (a general graph - $K_{2q}$),
then $N\left(  \emph{diadem}\left(  G\right)  \right)  $ may be a proper
subset of the set of all $\mu$-critical vertices of $G$ that are not $\alpha
$-critical\textit{. }In other words, it is not true that a vertex\textit{ }%
$v$\textit{ }is\textit{ }$\mu$-critical and not\textit{ }$\alpha$-critical if
and only if there exists a (maximum) critical independent set $A$\textit{
}such that\textit{ }$v\in N_{G}\left(  A\right)  $.

\section{An application}

A graph $G$ is a \textit{critical vertex almost K\"{o}nig-Egerv\'{a}ry graph},
if $G$ is not a K\"{o}nig-Egerv\'{a}ry graph, but $G-v$ is a
K\"{o}nig-Egerv\'{a}ry graph for every $v\in V(G)$, i.e., $\varrho_{v}\left(
G\right)  =n\left(  G\right)  $. For instance, every $C_{2k+1}$ is a critical
vertex almost K\"{o}nig-Egerv\'{a}ry graph. It is worth reminding that, by
Theorem \ref{th3}, $G$ is $1$-K\"{o}nig-Egerv\'{a}ry.

\begin{theorem}
\label{lem10}The graph $G$ is \textit{critical vertex almost
K\"{o}nig-Egerv\'{a}ry if and only if }$G$ is $1$-K\"{o}nig-Egerv\'{a}ry
without perfect matchings, $\xi\left(  G\right)  =0$ and $\beta\left(
G\right)  =0$.
\end{theorem}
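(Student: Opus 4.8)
The plan is to funnel everything through Theorem \ref{th17}: for a $1$-K\"onig-Egerv\'ary graph, $G-v$ is K\"onig-Egerv\'ary exactly when $v$ is neither $\alpha$-critical nor $\mu$-critical. Since $\mathrm{core}(G)$ is precisely the set of $\alpha$-critical vertices, "no vertex is $\alpha$-critical" is literally $\xi(G)=0$. Hence $\varrho_v(G)=n(G)$ amounts to "$\xi(G)=0$ and $G$ has no $\mu$-critical vertex", and the whole statement reduces to translating "no $\mu$-critical vertex" into "no perfect matching together with $\beta(G)=0$". I first note that a critical vertex almost K\"onig-Egerv\'ary graph is in particular vertex almost K\"onig-Egerv\'ary, hence $1$-K\"onig-Egerv\'ary by Theorem \ref{th3}, so Theorem \ref{th17} is available on both sides.

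For the forward implication I would assume $\varrho_v(G)=n(G)$, so by Theorem \ref{th17} every vertex is neither $\alpha$-critical nor $\mu$-critical. No $\alpha$-critical vertex means $\mathrm{core}(G)=\emptyset$, i.e. $\xi(G)=0$. If $G$ had a perfect matching, deleting any vertex would strictly decrease $\mu(G)$, making every vertex $\mu$-critical, a contradiction; so $G$ has no perfect matching. Finally, $\mathrm{diadem}(G)$ is critical by Theorem \ref{th444}, so Theorem \ref{th333}(iii) supplies a matching from $N_G(\mathrm{diadem}(G))$ into $\mathrm{diadem}(G)$, whence Theorem \ref{th11} forces every vertex of $N_G(\mathrm{diadem}(G))$ to be $\mu$-critical; as there are none, $N_G(\mathrm{diadem}(G))=\emptyset$. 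Were $\mathrm{diadem}(G)$ nonempty, its vertices would be isolated, hence would lie in every maximum independent set and so in $\mathrm{core}(G)$, contradicting $\xi(G)=0$; therefore $\mathrm{diadem}(G)=\emptyset$ and $\beta(G)=0$.

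For the converse I would assume $G$ is $1$-K\"onig-Egerv\'ary with no perfect matching, $\xi(G)=0$ and $\beta(G)=0$. The crucial consequence of $\beta(G)=0$ is that $\emptyset$ is the only critical independent set, so $d(G)=0$ and every nonempty independent set $I$ satisfies the strict expansion $|N_G(I)|\ge|I|+1$. Applying Hall's theorem to a maximum independent set gives $\mu(G)\ge\alpha(G)$, while the absence of a perfect matching with Theorem \ref{th12} gives $\mu(G)\le\alpha(G)$, so $\alpha(G)=\mu(G)$. Now fix any $u$. As $\xi(G)=0$, there is a maximum independent set $S$ with $u\notin S$, and all neighbours of subsets of $S$ lie in $V(G)-S\ni u$. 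Deleting $u$ from the neighbour side and using strict expansion, every nonempty $I\subseteq S$ still obeys $|N_G(I)\setminus\{u\}|\ge|N_G(I)|-1\ge|I|$, so Hall's condition persists and yields a matching saturating $S$ and avoiding $u$; it has $\alpha(G)=\mu(G)$ edges, hence is a maximum matching missing $u$, so $u$ is not $\mu$-critical. Since $u$ is also not $\alpha$-critical, Theorem \ref{th17} gives that $G-u$ is K\"onig-Egerv\'ary; as $u$ was arbitrary and $G$ is not K\"onig-Egerv\'ary, $G$ is critical vertex almost K\"onig-Egerv\'ary.

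The step I expect to carry the real weight is the $\mu$-critical part of the converse. The point is that $\beta(G)=0$ delivers \emph{strict} (not merely weak) neighbourhood expansion, which is robust enough for Hall's condition to survive the removal of any prescribed target vertex $u$, while the no-perfect-matching hypothesis is exactly what upgrades $\mu(G)\ge\alpha(G)$ to $\alpha(G)=\mu(G)$ and thereby certifies the $S$-saturating matching as maximum. This also explains why the hypothesis is indispensable: $K_4$ is $1$-K\"onig-Egerv\'ary with $\xi(K_4)=\beta(K_4)=0$ yet possesses a perfect matching, so every vertex is $\mu$-critical and $\varrho_v(K_4)=0$, in accordance with the earlier remark that in the presence of perfect matchings $\mu$-critical vertices need not be detected by neighbourhoods of critical independent sets.
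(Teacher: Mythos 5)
Your proof is correct and follows essentially the same route as the paper's: the converse is the paper's argument almost verbatim (regularizability from $\beta(G)=0$, Hall's theorem applied twice, and $\alpha(G)=\mu(G)$ from the absence of a perfect matching), and the forward direction extracts $\xi(G)=0$ and the non-existence of a perfect matching identically. The only divergence is that you derive $\beta(G)=0$ by applying Theorem \ref{th11} to $N(\mathrm{diadem}(G))$ directly rather than via the inequality of Theorem \ref{cor18} (whose proof rests on that same Theorem \ref{th11}), which has the incidental benefit of making explicit why $N(\mathrm{diadem}(G))=\emptyset$ forces $\mathrm{diadem}(G)=\emptyset$.
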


\begin{proof}
\emph{"If" Part.} Suppose that $G$ is critical vertex almost
K\"{o}nig-Egerv\'{a}ry.\textit{ } By Theorem \ref{th3}, $G$ is a
$1$-K\"{o}nig-Egerv\'{a}ry graph. For every $v\in V\left(  G\right)  $ we have%
\[
\alpha\left(  G\right)  +\mu\left(  G\right)  =n\left(  G\right)
-1=\alpha\left(  G-v\right)  +\mu\left(  G-v\right)  \text{,}%
\]
which, by Lemma \ref{lem1}, means that $\alpha\left(  G\right)  =\alpha\left(
G-v\right)  $ and $\mu\left(  G\right)  =\mu\left(  G-v\right)  $. In other
words, $G$ has neither $\alpha$-critical vertices ($\emph{core}\left(
G\right)  =\emptyset$) nor perfect matchings. In may be put down like
$\xi\left(  G\right)  =0$ and $\mu\left(  G\right)  <\frac{n\left(  G\right)
}{2}$.

By Theorem \ref{th3} and Theorem \ref{cor18}, we know that
\[
\varrho_{v}\left(  G\right)  \leq n\left(  G\right)  +d\left(  G\right)
-\xi\left(  G\right)  -\beta(G).
\]
Hence,
\begin{align*}
0  &  \leq d\left(  G\right)  -\beta(G)\Longleftrightarrow\\
\left\vert \emph{diadem}\left(  G\right)  \right\vert  &  \leq\left\vert
\emph{diadem}\left(  G\right)  \right\vert -\left\vert N\left(  \emph{diadem}%
\left(  G\right)  \right)  \right\vert \Longleftrightarrow\\
\left\vert N\left(  \emph{diadem}\left(  G\right)  \right)  \right\vert  &
=0\Longleftrightarrow\left\vert \emph{diadem}\left(  G\right)  \right\vert
=0\Longleftrightarrow\emph{diadem}\left(  G\right)  =\emptyset,
\end{align*}
which means $\beta\left(  G\right)  =0$ and, consequently, completes the proof.

\emph{"Only If" Part.} Suppose that $G$ is a $1$-K\"{o}nig-Egerv\'{a}ry graph
(i.e., $\alpha\left(  G\right)  +\mu\left(  G\right)  =n\left(  G\right)  -1$)
without perfect matchings (i.e., $\mu\left(  G\right)  <\frac{n\left(
G\right)  }{2}$), $\xi\left(  G\right)  =0$ and $\beta\left(  G\right)  =0$.
We have to prove that $\alpha\left(  G-v\right)  +\mu\left(  G-v\right)
=n\left(  G-v\right)  $ for each $v\in V\left(  G\right)  $.

Since $\xi\left(  G\right)  =0$, there are no $\alpha$-critical vertices,
i.e., $\alpha\left(  G\right)  =\alpha\left(  G-v\right)  $ for every $v\in
V\left(  G\right)  $. Hence, it is enough to prove that $\mu\left(  G\right)
=\mu\left(  G-v\right)  $ for every $v\in V\left(  G\right)  $.

The condition that $\beta\left(  G\right)  =0$ is equivalent to the fact that
$G$ has only one critical set, namely, the empty set. In other words, $G$ is
\textit{regularizable}, i.e., $\left\vert A\right\vert <\left\vert
N_{G}\left(  A\right)  \right\vert $ for every non-empty independent set $A$
of $G$ \cite{Berge1982}. In particular, if $S\in\Omega\left(  G\right)  $,
then
\[
\left\vert S\right\vert <\left\vert N_{G}\left(  S\right)  \right\vert
=\left\vert V\right\vert -\left\vert S\right\vert \Longleftrightarrow
\alpha\left(  G\right)  <\frac{n\left(  G\right)  }{2}.
\]

\emph{Claim 1.} $\alpha\left(  G\right)  =\mu\left(  G\right)  $.

Theorem \ref{th12}\emph{(i),(ii)} imply $\mu(G)\leq\alpha(G)$, because $G$ has
no perfect matchings. On the other hand, if $S\in\Omega\left(  G\right)  $,
then by Hall's Theorem, there is a matching from $S$ into $V\left(  G\right)
-S$, because $G$ is regularizable. Therefore, $\alpha(G)\leq\mu(G)$. Thus
$\alpha\left(  G\right)  =\mu\left(  G\right)  $.

\emph{Claim 2.} $\mu\left(  G\right)  =\mu\left(  G-v\right)  $ for each $v\in
V\left(  G\right)  $.

Let $v\in V\left(  G\right)  $. Hence, there exists a maximum independent set
$S$, such that $v\notin S$, because $\emph{core}\left(  G\right)  =\emptyset$.
As mentioned above, $\left\vert A\right\vert <\left\vert N_{G}\left(
A\right)  \right\vert $ for every $A\in\emph{Ind}\left(  G\right)  $.
Therefore, $\left\vert A\right\vert \leq\left\vert N_{G-v}\left(  A\right)
\right\vert $ for every independent set $A\subseteq S$. Consequently, by
Hall's Theorem, there is a matching, say $M$, from $S$ into $V\left(
G\right)  -\left\{  v\right\}  -S$. Clearly, by Lemma \ref{lem1}\emph{(iii)}
and \emph{Claim 1},
\[
\mu\left(  G\right)  \geq\mu\left(  G-v\right)  \geq\left\vert M\right\vert
=\left\vert S\right\vert =\alpha\left(  G\right)  =\mu\left(  G\right)  .
\]

Finally, $\mu\left(  G-v\right)  =\mu\left(  G\right)  $, which completes the
whole proof.
\end{proof}

For example, the Friendship graph $F_{k}$ is a $1$-K\"{o}nig-Egerv\'{a}ry
graph, $\emph{core}\left(  F_{k}\right)  =\emptyset$, $\emph{diadem}\left(
G\right)  =\emptyset$, and, by Theorem \ref{lem10}, $F_{k}$ is critical vertex
almost K\"{o}nig-Egerv\'{a}ry.

\begin{remark}
Claim 1 of Theorem \ref{lem10} mentions that if $G$ is \textit{critical vertex
almost K\"{o}nig-Egerv\'{a}ry, then} $\alpha\left(  G\right)  =\mu\left(
G\right)  $. Since $G$ is $1$-K\"{o}nig-Egerv\'{a}ry, it follows that
\[
n\left(  G\right)  =\alpha\left(  G\right)  +\mu\left(  G\right)
+1=2\alpha\left(  G\right)  +1,
\]
i.e., the order of $G$\ is odd.
\end{remark}

The following examples point out to the fact that conditions of Theorem
\ref{lem10} can not be weakened.

\begin{itemize}
\item There are $1$-K\"{o}nig-Egerv\'{a}ry graphs with a perfect matching
having $\emph{diadem}\left(  G\right)  =\emptyset$ and $\emph{core}\left(
G\right)  =\emptyset$, that are not critical vertex almost
K\"{o}nig-Egerv\'{a}ry, for instance $K_{4}$.

\item The graph $K_{5}$ shows that being a graph without perfect matchings,
$\emph{core}\left(  G\right)  =\emptyset$, $\emph{diadem}\left(  G\right)
=\emptyset$ is not enough to be critical vertex almost K\"{o}nig-Egerv\'{a}ry.

\item The $1$-K\"{o}nig-Egerv\'{a}ry graph $G_{2}$ in Figure \ref{fig111} has
$\alpha\left(  G_{2}\right)  =\mu\left(  G_{2}\right)  $, and $\emph{diadem}%
\left(  G\right)  =\emptyset$, while $\emph{core}\left(  G\right)
\neq\emptyset$, and, consequently, $G_{2}$ is not critical vertex almost
K\"{o}nig-Egerv\'{a}ry\textit{.}

\item The $1$-K\"{o}nig-Egerv\'{a}ry graph $G_{2}$ in Figure \ref{fig222} has
$\alpha\left(  G_{2}\right)  =\mu\left(  G_{2}\right)  $, and $\emph{core}%
\left(  G\right)  =\emptyset$, while $\emph{diadem}\left(  G\right)
\neq\emptyset$, and, consequently, $G_{2}$ is not critical vertex almost
K\"{o}nig-Egerv\'{a}ry\textit{.}
\end{itemize}

Using Theorem \ref{lem10} and the corresponding theorem from \cite{LevMan2024}%
, we may conclude with the following.

\begin{theorem}
Let $G\ $be a graph of order $n\left(  G\right)  \geq1$. Then $\varrho
_{v}\left(  G\right)  =n\left(  G\right)  $ if and only if either

\emph{(i)} $G$ is K\"{o}nig-Egerv\'{a}ry and $\emph{core}\left(  G\right)
=\ker(G)$;

or

\emph{(ii) }$G$ is $1$-K\"{o}nig-Egerv\'{a}ry without perfect matchings,
$\xi\left(  G\right)  =0$ and $\beta\left(  G\right)  =0$.
\end{theorem}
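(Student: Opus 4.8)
The plan is to split the argument into two exhaustive cases according to the type of $G$ and then invoke the characterizations already at our disposal. Since $n\left(  G\right)  \geq1$, the hypothesis $\varrho_{v}\left(  G\right)  =n\left(  G\right)  $ guarantees at least one vertex $v$ with $G-v$ K\"{o}nig-Egerv\'{a}ry; by Theorem \ref{th8} this forces $G$ to be either a K\"{o}nig-Egerv\'{a}ry graph or a $1$-K\"{o}nig-Egerv\'{a}ry graph. These two possibilities are mutually exclusive and will yield the two alternatives \emph{(i)} and \emph{(ii)} of the statement.

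For the K\"{o}nig-Egerv\'{a}ry case I would apply Theorem \ref{th9}, which gives $\varrho_{v}\left(  G\right)  =n\left(  G\right)  -\xi\left(  G\right)  +\varepsilon\left(  G\right)  $. Hence $\varrho_{v}\left(  G\right)  =n\left(  G\right)  $ is equivalent to $\xi\left(  G\right)  =\varepsilon\left(  G\right)  $, i.e., $\left\vert \emph{core}\left(  G\right)  \right\vert =\left\vert \ker(G)\right\vert $. To upgrade this equality of cardinalities to the set identity $\emph{core}\left(  G\right)  =\ker(G)$ required by \emph{(i)}, I would first record the inclusion $\ker(G)\subseteq\emph{core}\left(  G\right)  $: in a K\"{o}nig-Egerv\'{a}ry graph every maximum independent set is critical by Theorem \ref{th715}\emph{(iv)}, so the intersection $\ker(G)$ of \emph{all} critical independent sets is contained in the intersection $\emph{core}\left(  G\right)  $ of the smaller subfamily $\Omega\left(  G\right)  $. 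With this inclusion in hand, $\xi\left(  G\right)  =\varepsilon\left(  G\right)  $ becomes equivalent to $\emph{core}\left(  G\right)  =\ker(G)$, which is exactly \emph{(i)}.

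In the $1$-K\"{o}nig-Egerv\'{a}ry case, so that $G$ is not K\"{o}nig-Egerv\'{a}ry, the equality $\varrho_{v}\left(  G\right)  =n\left(  G\right)  $ says precisely that $G-v$ is K\"{o}nig-Egerv\'{a}ry for every $v\in V\left(  G\right)  $, that is, $G$ is a critical vertex almost K\"{o}nig-Egerv\'{a}ry graph. Theorem \ref{lem10} then characterizes this as $G$ being $1$-K\"{o}nig-Egerv\'{a}ry without perfect matchings with $\xi\left(  G\right)  =0$ and $\beta\left(  G\right)  =0$, which is alternative \emph{(ii)}. The converse directions are immediate: if \emph{(i)} holds then Theorem \ref{th9} returns $\varrho_{v}\left(  G\right)  =n\left(  G\right)  $, and if \emph{(ii)} holds then Theorem \ref{lem10} yields that $G$ is critical vertex almost K\"{o}nig-Egerv\'{a}ry, i.e., $\varrho_{v}\left(  G\right)  =n\left(  G\right)  $.

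I expect the only nonroutine point to be the passage from $\left\vert \emph{core}\left(  G\right)  \right\vert =\left\vert \ker(G)\right\vert $ to the set identity $\emph{core}\left(  G\right)  =\ker(G)$ in the K\"{o}nig-Egerv\'{a}ry case; the auxiliary inclusion $\ker(G)\subseteq\emph{core}\left(  G\right)  $ is what makes this work, and it follows cleanly from Theorem \ref{th715}\emph{(iv)}. Everything else is a direct application of Theorem \ref{th8}, Theorem \ref{th9} and Theorem \ref{lem10}, together with the observation that the K\"{o}nig-Egerv\'{a}ry and $1$-K\"{o}nig-Egerv\'{a}ry cases are disjoint, so that conditions \emph{(i)} and \emph{(ii)} cannot hold simultaneously.
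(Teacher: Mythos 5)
Your proposal is correct and follows essentially the same route as the paper, which likewise splits into the K\"{o}nig-Egerv\'{a}ry and $1$-K\"{o}nig-Egerv\'{a}ry cases (the split justified by Theorem \ref{th8}) and settles the latter case with Theorem \ref{lem10}. The only difference is that the paper disposes of the K\"{o}nig-Egerv\'{a}ry case by citing the corresponding theorem of \cite{LevMan2024} directly, whereas you rederive it from Theorem \ref{th9} together with the (correct) inclusion $\ker(G)\subseteq\emph{core}\left(  G\right)  $ obtained from Theorem \ref{th715}\emph{(iv)}, which upgrades $\xi\left(  G\right)  =\varepsilon\left(  G\right)  $ to the set identity.
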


It is worth mentioning that if a graph $G$ is neither K\"{o}nig-Egerv\'{a}ry
nor $1$-K\"{o}nig-Egerv\'{a}ry, then $\varrho_{v}\left(  G\right)  =0$ in
accordance with Theorem \ref{th8}.

\section{Conclusions}

\begin{theorem}
\label{th10}If there is an edge $e\in E\left(  G\right)  $, such that $G-e$ is
a K\"{o}nig-Egerv\'{a}ry graph, then $G$ is either K\"{o}nig-Egerv\'{a}ry or
$1$-K\"{o}nig-Egerv\'{a}ry.
\end{theorem}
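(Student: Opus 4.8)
The plan is to mimic the vertex-deletion argument of Theorem \ref{th8}, replacing the vertex bounds by the edge bounds of Lemma \ref{lem1}. The crucial observation is that deleting an edge does not change the vertex set, so $n(G-e)=n(G)$; since $G-e$ is assumed to be König-Egerváry, this gives the single equation $\alpha(G-e)+\mu(G-e)=n(G-e)=n(G)$ with which to work.

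Next I would sandwich the two parameters of $G-e$ between those of $G$. By Lemma \ref{lem1}\emph{(i)} we have $\alpha(G)\le\alpha(G-e)\le\alpha(G)+1$, and by Lemma \ref{lem1}\emph{(iii)} (applied with $a=e\in E(G)$) we have $\mu(G)-1\le\mu(G-e)\le\mu(G)$. Adding these two chains of inequalities and substituting $\alpha(G-e)+\mu(G-e)=n(G)$ yields
\[
\alpha(G)+\mu(G)-1\le n(G)\le\alpha(G)+\mu(G)+1,
\]
so that $n(G)-1\le\alpha(G)+\mu(G)\le n(G)+1$.

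Finally I would invoke the universal inequality $\alpha(G)+\mu(G)\le n(G)$, valid for every graph, to discard the value $n(G)+1$ on the right. This leaves $\alpha(G)+\mu(G)\in\{n(G)-1,\,n(G)\}$, which is precisely the statement that $G$ is either $1$-König-Egerváry or König-Egerváry.

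There is essentially no hard step here; the only point that needs care is excluding the combination $\alpha(G-e)=\alpha(G)$ together with $\mu(G-e)=\mu(G)-1$, which would force $\alpha(G)+\mu(G)=n(G)+1$ — the same impossible branch that Case 2 of Theorem \ref{th8} had to rule out, here handled automatically by the global bound $\alpha(G)+\mu(G)\le n(G)$. If one prefers an explicit case analysis in the style of Theorem \ref{th8}, the four sign combinations of $\alpha(G-e)-\alpha(G)\in\{0,1\}$ and $\mu(G-e)-\mu(G)\in\{-1,0\}$ can be examined one at a time, with Corollary \ref{cor25} clarifying exactly when the sum $\alpha+\mu$ is preserved under deletion of $e$.
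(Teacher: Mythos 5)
Your proof is correct and follows essentially the same route as the paper's: both rest on $n(G-e)=n(G)$, the K\"{o}nig--Egerv\'{a}ry equation for $G-e$, and the bounds of Lemma \ref{lem1}, with the universal inequality $\alpha(G)+\mu(G)\leq n(G)$ eliminating the one impossible combination. The only difference is presentational: you add the two chains of inequalities at once, whereas the paper runs through the sign combinations as explicit cases.
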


\begin{proof}
By definition, we know that
\[
\alpha(G-e)+\mu(G-e)=n\left(  G-e\right)  =n\left(  G\right)  .
\]

\textit{Case 1.} $\alpha(G-e)=\alpha(G)$.

Thus $\alpha(G)+\mu(G-e)=n\left(  G\right)  $. If $\mu(G-e)=\mu(G)$, then
$\alpha(G)+\mu(G)=n\left(  G\right)  $, which means that $G$ is a
K\"{o}nig-Egerv\'{a}ry graph; otherwise, $\mu(G-e)=\mu(G)-1$ and,
consequently, $\alpha(G)+\mu(G)=n\left(  G\right)  -1$, i.e., $G$ is a
$1$-K\"{o}nig-Egerv\'{a}ry graph.

\textit{Case 2.} $\alpha(G-e)=\alpha(G)+1$.

Thus $\alpha(G)+1+\mu(G-e)=n\left(  G\right)  $. If $\mu(G-e)=\mu(G)$, then
$\alpha(G)+\mu(G)=n\left(  G\right)  -1$, which means that $G$ is a
$1$-K\"{o}nig-Egerv\'{a}ry graph; otherwise, $\mu(G-e)=\mu(G)-1$ and,
consequently, $\alpha(G)+\mu(G)=n\left(  G\right)  $, i.e., $G$ is a
K\"{o}nig-Egerv\'{a}ry graph.
\end{proof}

By Theorem \ref{th10}, if $G$ is neither K\"{o}nig-Egerv\'{a}ry nor
$1$-K\"{o}nig-Egerv\'{a}ry, then $\varrho_{e}\left(  G\right)  =0$. Theorem
\ref{th9} claims that if $G$ is K\"{o}nig-Egerv\'{a}ry, then $\varrho
_{e}\left(  G\right)  \leq m\left(  G\right)  -\xi\left(  G\right)
+\varepsilon\left(  G\right)  $. It justifies the following.

\begin{problem}
Bound $\varrho_{e}\left(  G\right)  $ using various graph invariants for
$1$-K\"{o}nig-Egerv\'{a}ry graphs.
\end{problem}

A graph $G$ is a \textit{critical edge almost K\"{o}nig-Egerv\'{a}ry graph},
if $G$ is not a K\"{o}nig-Egerv\'{a}ry graph, but $G-e$ is a
K\"{o}nig-Egerv\'{a}ry graph for every $e\in E(G)$, i.e., $\varrho_{e}\left(
G\right)  =m\left(  G\right)  $. For instance, every $C_{2k+1}$ is a critical
edge almost K\"{o}nig-Egerv\'{a}ry graph. It is worth reminding that, by
Theorem \ref{th3}, $G$ is $1$-K\"{o}nig-Egerv\'{a}ry.

\begin{lemma}
If $G$ is a \textit{critical edge almost K\"{o}nig-Egerv\'{a}ry graph, then
every edge is }$\alpha$-critical.
\end{lemma}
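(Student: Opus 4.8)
The plan is to combine the defining identities of the two graph classes with the elementary bounds of Lemma \ref{lem1}. First I would record that, by Theorem \ref{th3}, a critical edge almost K\"{o}nig-Egerv\'{a}ry graph $G$ is automatically $1$-K\"{o}nig-Egerv\'{a}ry, so that $\alpha(G)+\mu(G)=n(G)-1$. Then I fix an arbitrary edge $e\in E(G)$. Since $G$ is critical edge almost K\"{o}nig-Egerv\'{a}ry, $G-e$ is K\"{o}nig-Egerv\'{a}ry, and because deleting an edge leaves the vertex set unchanged, this yields $\alpha(G-e)+\mu(G-e)=n(G-e)=n(G)$.

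Next I would subtract the two identities to obtain $\left( \alpha(G-e)-\alpha(G)\right) +\left( \mu(G-e)-\mu(G)\right) =1$. Now Lemma \ref{lem1}\emph{(i)} forces $\alpha(G-e)-\alpha(G)\in\{0,1\}$, while Lemma \ref{lem1}\emph{(iii)} forces $\mu(G-e)-\mu(G)\in\{-1,0\}$. Inspecting the (at most four) admissible combinations, the only way their sum can equal $1$ is $\alpha(G-e)-\alpha(G)=1$ together with $\mu(G-e)-\mu(G)=0$. Hence $\alpha(G-e)=\alpha(G)+1$, i.e. $e$ is $\alpha$-critical (and, as a by-product, non-$\mu$-critical). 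Since $e$ was arbitrary, every edge of $G$ is $\alpha$-critical, as claimed.

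I do not expect a genuine obstacle here: the argument is a short counting step resting on Lemma \ref{lem1}. The one point that deserves care is the direction of each inequality in Lemma \ref{lem1}---that edge deletion can only raise $\alpha$ (by at most one) and can only lower $\mu$ (by at most one)---so that the two admissible ranges meet in exactly one case. Alternatively, one could route the same conclusion through Corollary \ref{cor25}: the sum $\alpha+\mu$ strictly increases by $1$ upon deleting $e$, so $e$ is neither ``both $\alpha$- and $\mu$-critical'' nor ``both non-$\alpha$- and non-$\mu$-critical''; ruling out the remaining non-$\alpha$-critical, $\mu$-critical alternative again amounts to observing that a $\mu$-critical edge would make the difference $-1$ rather than $+1$.
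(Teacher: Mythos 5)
Your proof is correct and follows essentially the same route as the paper: both arguments combine the identity $\alpha(G)+\mu(G)=n(G)-1$ (from Theorem \ref{th3}) with $\alpha(G-e)+\mu(G-e)=n(G)$ and the elementary bounds of Lemma \ref{lem1} to force $\alpha(G-e)=\alpha(G)+1$ and $\mu(G-e)=\mu(G)$. Your presentation via subtracting the two identities and enumerating the admissible increments is just a cleaner packaging of the paper's chain of inequalities.
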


\begin{proof}
For each $e\in E\left(  G\right)  $, we have%
\[
\alpha\left(  G\right)  +\mu\left(  G\right)  \leq\alpha\left(  G-e\right)
+\mu\left(  G\right)  \leq n\left(  G\right)  =\alpha\left(  G-e\right)
+\mu\left(  G-e\right)  \text{,}%
\]
which implies that $\mu\left(  G\right)  \leq\mu\left(  G-e\right)  $.
Therefore, $\mu\left(  G-e\right)  =\mu\left(  G\right)  $ and further,
\[
\alpha\left(  G\right)  +\mu\left(  G\right)  <n\left(  G\right)
=\alpha\left(  G-e\right)  +\mu\left(  G\right)
\]
ensures that $e\in E(G)$ must be $\alpha$-critical.
\end{proof}

The converse is not true; e.g., $K_{n},n\geq5$.

\begin{problem}
\label{problem1}Characterize \textit{critical edge almost
K\"{o}nig-Egerv\'{a}ry graphs.}
\end{problem}

Theorem \ref{th9} claims that if $G$ is K\"{o}nig-Egerv\'{a}ry, then
$\varrho_{v}\left(  G\right)  =n\left(  G\right)  -\xi\left(  G\right)
+\varepsilon\left(  G\right)  $. It justifies the following.

\begin{problem}
Express $\varrho_{v}\left(  G\right)  $ using various graph invariants for
$1$-K\"{o}nig-Egerv\'{a}ry graphs.
\end{problem}

\section{Declarations}

\textbf{Conflict of interest} We declare that we have no conflict of interest.

\end{document}